\newcommand{\eqref}[1]{(\ref{#1})}
\newtheorem{theorem}{Theorem}
\newtheorem{lemma}[theorem]{Lemma}
\newtheorem{proposition}[theorem]{Proposition}
\begin{document}
\begin{frontmatter}

\title{A monotone Sinai theorem\thanksref{T1}}
\runtitle{A monotone Sinai theorem}

\begin{aug}
\author[A]{\fnms{Anthony}~\snm{Quas}\ead[label=e1]{aquas@uvic.ca}\ead[label=u1,url]{http://www.math.uvic.ca/\textasciitilde aquas/}}
\and
\author[B]{\fnms{Terry}~\snm{Soo}\corref{}\ead[label=e2]{tsoo@ku.edu}\ead[label=u2,url]{http://www.math.ku.edu/u/tsoo}}
\runauthor{A. Quas and T. Soo}
\affiliation{University of Victoria and University of Kansas}
\address[A]{Department of Mathematics and Statistics \\
University of Victoria \\
P.O. BOX 3060 STN CSC \\
Victoria, British Columbia V8W 3R4 \\
Canada\\
\printead{e1} \\
\printead{u1}} 
\address[B]{Department of Mathematics \\
University of Kansas \\
405 Snow Hall \\
1460 Jayhawk Blvd \\
Lawrence, Kansas 66045-7594\\
USA \\
\printead{e2}\\
\printead{u2}}
\end{aug}
\thankstext{T1}{Funded in part by NSERC and MSRI.}

\received{\smonth{11} \syear{2013}}
\revised{\smonth{9} \syear{2014}}

%
\begin{abstract}
Sinai proved that a nonatomic ergodic measure-preserving system has any
Bernoulli shift of no greater entropy as a factor. Given a Bernoulli
shift, we show that any other Bernoulli shift that is of strictly
less entropy and is stochastically dominated by the original measure
can be obtained as a \textit{monotone} factor; that is, the factor map
has the
property
that for each point in the domain, its image under the factor map is
coordinatewise
smaller than or equal to the original point.
\end{abstract}

%
\begin{keyword}[class=AMS]
\kwd{37A35}
\kwd{60G10}
\kwd{60E15}
\end{keyword}
\begin{keyword}
\kwd{Sinai factor theorem}
\kwd{stochastic domination}
\kwd{monotone coupling}
\kwd{Burton--Rothstein}
\end{keyword}
\end{frontmatter}
%


\section{Introduction}

Let $(\mathrm{X}, \mu)$ be a probability space. If $T\dvtx \mathrm{X} \to
\mathrm{X}$ is a map such
that $\mu\circ T^{-1}= \mu$, then $(\mathrm{X}, \mu, T)$ is a
measure-preserving
system, and if every almost-surely $T$-invariant set has measure zero
or one,
then the system is ergodic. Let $S$ be a self-map of a measurable space
$Y$. A measurable mapping $\phi\dvtx  \mathrm{X} \to\mathrm{Y}$ such
that $\mu\circ\phi^{-1} = \nu$ and $\phi\circ T = S \circ\phi$ on a
subset of $\mu$-full measure is a
\textit{factor map};
when a factor map exists, we say that $(\mathrm{Y}, \nu, S)$ is a
\textit{factor}
of $( \mathrm{X}, \mu, T)$. It is well known that in this case, $h(\nu
)\le h(\mu)$,
where $h$ is the (Kolmogorov--Sinai) entropy.
For a positive integer $N$, let
$[N]:=\{0,1, \ldots, N-1\}$. If $\mathrm{Y}=[N]^{\mathbb{Z}}$ is the
space of all bi-infinite sequences of a finite number of symbols and
$\nu= p^{\mathbb{Z}}$ for some nontrivial probability measure $p =
(p_i)_{i=0} ^{N-1}$ on
$[N]$, and $S$ is the left-shift given by $S(\mathrm{y})_i = \mathrm{y}_{i+1}$ for all
$i \in\mathbb{Z}$, then we say that $B(p) := (\mathrm{Y}, \nu, S)$ is
a~\textit{Bernoulli shift}
on $N$ symbols and that $\nu$ is a
\textit{Bernoulli measure}.
The entropy of the Bernoulli shift $B(p)$
is given by the positive number
\[
H(p):=-\sum_{i=0}^{n-1} p_i
\log p_i.
\]

Sinai \cite{Sinai,MR2766434} proved that if $(\mathrm{X}, \mu, T)$ is a
nonatomic
invertible
ergodic measure-preserving system of entropy $h>0$, then it has any Bernoulli
shift of any entropy $h'\le h$ as a factor.

Let $(E,\succeq)$ be a partially ordered Polish
space such that the set $M:=\{(x, x') \in E^2\dvtx  x \succeq x'\}$
is closed in the product topology. For two probability measures
$\alpha$ and $\beta$ on $E$,\vadjust{\goodbreak} we say that $\alpha$ \textit{stochastically dominates} $\beta$
if the integrals with respect to $\alpha$ and $\beta$ satisfy $\alpha
(f) \geq\beta(f)$ for all nondecreasing bounded functions $f \dvtx  E \to
\mathbb{R}$. By Strassen's theorem \cite{Strassen}, this is equivalent
to the existence of a~monotone coupling of $\alpha$ and $\beta$; that
is, a
measure $\rho$ on $E\times E$ whose projection on the first factor is
$\alpha$,
on the second factor is $\beta$,
and satisfies $\rho(M)=1$.

In our context, the partial order is defined by $\mathrm{x}\succeq
\mathrm{x}'$ if $\mathrm{x}_i\ge\mathrm{x}'_i$ for each
$i\in\mathbb{Z}$. It is well known that $p^{\mathbb{Z}}$
stochastically dominates $q^{\mathbb{Z}}$
if and only if $p$ stochastically dominates $q$ (where the partial
order on\vspace*{1pt}
$[N]$ is the standard total order), and $p$~stochastically dominates
$q$ if and only if
$\sum_{i=0}^k p_i \leq \sum_{i=0}^k q_i$ for all
$0 \leq k < N$.
A~factor map, mapping $[N]^{\mathbb{Z}}$ to itself, is\vspace*{1pt} said
to be \textit{monotone} if $\phi(\mathrm{x})\preceq\mathrm{x}$ for
each $\mathrm{x}\in[N]^{\mathbb{Z}}$. In our context,
by the definition of a factor, this is equivalent to the condition
$\phi(\mathrm{x})_0\le\mathrm{x}_0$ for all $\mathrm{x}\in[N]^{\mathbb{Z}}$.
Notice that if $\phi$ is a monotone factor from $([N]^{\mathbb{Z}},\mu)$
to $([N]^{\mathbb{Z}},\nu)$, then $\mu$ stochastically dominates $\nu$.

It follows from the above that two necessary conditions for
$B(q)$ to be a monotone factor of $B(p)$ are that $p$ stochastically
dominates $q$ and $H(p)\ge H(q)$.
Karen Ball and Russell Lyons \cite{Ball} asked about a partial converse:

\begin{quote}
If $p$ and $q$ are probability measures on $[N]$ such that $p$
stochastically dominates $q$
and $H(p)>H(q)$,
does there exist a monotone factor map from $B(p)$ to $B(q)$?
\end{quote}

We answer this question affirmatively.

%
\begin{theorem}
\label{result}
Let $B(p)$ and $B(q)$ be Bernoulli shifts with symbols in $[N]$
(where one allows the possibility that
$p$ and $q$ give zero mass to some symbols). If the entropy of
$B(p)$
is strictly greater than that of $B(q)$ and the measure $p$
stochastically dominates $q$, then $B(q)$ is a monotone factor of $B(p)$.
\end{theorem}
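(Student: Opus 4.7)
The plan is to follow the Burton--Rothstein Baire-category strategy for Sinai's theorem, but carry it out in the space of \emph{monotone} joinings rather than arbitrary joinings. Set $\mu = p^{\Z}$ and $\nu = q^{\Z}$, and let $J$ denote the collection of shift-invariant Borel probability measures on $([N]^{\Z})^2$ with first marginal $\mu$, second marginal $\nu$, concentrated on $\ns{(\x,\y) : \y \preceq \x}$. Applying Strassen's theorem to obtain a single-site monotone coupling $\pi$ of $p$ and $q$ and forming the product $\pi^{\Z}$ shows $J$ is non-empty; it is also convex and weak-$*$ compact, hence Polish under a compatible metric.

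A monotone factor map from $B(p)$ onto $B(q)$ corresponds exactly to a \emph{graph joining} in $J$, that is, an element $\rho \in J$ under which the second coordinate is $\rho$-almost surely equal to a measurable function of the first. The set $\mathcal{F}$ of graph joinings is $G_\delta$ in $J$, since ``$\y$ is determined by $\x$'' unfolds as countably many conditions on conditional probabilities of cylinder sets. By the Baire category theorem it therefore suffices to show $\mathcal{F}$ is dense in $J$; this is the heart of the proof and what I expect to be the main obstacle.

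Density I would establish by a finitary marker-and-filler construction adapted to monotonicity. Given $\rho \in J$ and a weak-$*$ neighbourhood of $\rho$ specified by finitely many cylinders, I would choose a large block length $n$ and delimit blocks in $\x$ by rare marker patterns of small density; within each block the corresponding $\y$-coordinates are to be produced as a deterministic function of the $\x$-block. The feasibility of this coding rests on an entropy counting estimate: typical $\x$-blocks number $\approx 2^{nH(p)}$, while for each typical $\y$-block $\y^{*}$ the number of typical $\x$-blocks with $\x \succeq \y^{*}$ compatible with $\y^{*}$ under $\rho$ is $\approx 2^{n(H(\rho)-H(q))} \ge 2^{n(H(p)-H(q))}$; the positive slack $H(p)-H(q) > 0$ supplies the exponential budget for partitioning typical $\x$-blocks into the preimages of typical $\y$-blocks in such a way that the induced distribution of $\y$ approximates $\nu$ and the empirical joint distribution approximates $\rho$ on the prescribed cylinders. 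Monotonicity holds automatically, since only $\y^{*}$ with $\y^{*} \preceq \x$ are admitted as images. The delicate step is executing this matching simultaneously over all typical $\y$-blocks while honouring the pointwise inequality $\y_i \le \x_i$ together with the cylinder approximations; once that finitary coding lemma is in place, Baire category closes the argument.
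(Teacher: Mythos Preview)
Your high-level strategy---Baire category in a space of monotone joinings, with factors forming a $G_\delta$ set---matches the paper's. But the entire substance of the argument is precisely the step you flag as ``delicate'' and then leave open: producing, from a given monotone joining $\rho$, a nearby monotone joining that is an $\e$-almost factor. Your entropy-counting heuristic explains why there is \emph{room} for such a coding, but does not supply one. In the ordinary Sinai theorem one invokes a marriage lemma to assign $\x$-blocks to $\y$-blocks; the difficulty here is that a generic matching has no reason to respect the coordinatewise constraint $\y\preceq\x$, and you have not indicated a mechanism that enforces it.

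The paper resolves this with two ingredients you are missing. First, a general monotone joining $\rho$ has no independence between blocks, so the ``block coding'' picture you describe is not directly available; the paper first replaces $\rho$ by Ball's \emph{alternating joining}, which is weak-star close to $\rho$, still monotone, and has the crucial property that, conditional on a marker partition determined by $\x$ alone, the restrictions to the free blocks are independent with a common law $\Gamma$. Second, the coding on these blocks is carried out not by a bare marriage argument but via del~Junco's \emph{iterative star-coupling}, combined with a Keane--Smorodinsky step producing a measure \emph{subordinate} to $\Gamma^{\kinitial}$ on an initial block. The star-coupling preserves the joint law on each block (so if the input pair $(X_i,Y_i)$ satisfies $X_i\succeq Y_i$, so does the output pair), and a Shannon--McMillan--Breiman argument bounds the proportion of $\x$-blocks that are ``split'' and hence fail to determine their $\y$-image. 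Monotonicity survives precisely because every modified coupling is subordinate (absolutely continuous with respect) to a monotone one; this subordination device, together with Ball's block-independence structure, is the real content, and your outline stops just short of it.

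A smaller point: you take $J$ to be \emph{all} monotone joinings, but graph joinings are necessarily ergodic, so density of graph joinings in $J$ would require approximating non-ergodic monotone joinings by ergodic ones---an extra step you do not address. The paper sidesteps this by taking the Baire space to be the \emph{ergodic} monotone joinings from the outset, which it shows is a $G_\delta$ subset of the compact space of all joinings, hence Polish and Baire.
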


Ball \cite{Ball} proved Theorem~\ref{result} in the special case
where $q$ only assigns positive mass to the two symbols $\{0,1\}$,
and also in the case where the entropy of $B(p)$ is greater than
logarithm of the total number of symbols with positive
$q$-mass; in particular, this implies that if $n>k$ and $q_i = 1/k$ for
all $ 0 \leq i < k$ and $p_i = 1/n$ for all $0 \leq i < n$,
then $B(q)$ is a monotone factor of $B(p)$. Ball's proof worked by adapting
and extending the methods of the Keane and Smorodinsky \cite{keanea,keaneb}
proof of the Sinai theorem for case of Bernoulli shift.
We will make use of a monotone coupling of two Bernoulli shifts that was
defined by Ball (see Section~\ref{ball}), having a useful product structure
and independence properties.

Another idea that we make use of comes from
del Junco's proof \cite{Juncoa,Junco} of the Sinai
theorem. He replaces the combinatorial
marriage theorem (see Section~\ref{hall}) used by Keane and Smorodinsky
and by Ball
with a ingenious variation of the quantile coupling (see Section~\ref{delstar})
that we adapt to handle monotonicity.
Our proof will also make use of a version of the marriage theorem of
Keane and Smorodinsky, but in a more limited way.
By combining the tools of Ball and del Junco, we are able to use the
Burton--Rothstein \cite{BurKeaSer,BurRot} method to produce a monotone factor
using the Baire category theorem.

Before we discuss in more detail the idea
of the proof of Theorem~\ref{result} and other related results in ergodic
theory and probability in the next sections, we ask a few questions and
state an extension of Theorem~\ref{result}, where stochastic domination
is replaced by a general relation.

%
\begin{question}
Is\vspace*{1pt} Theorem~\ref{result} true if we allow for the possibility
that the entropy of $B(p)$ is equal to the entropy of $B(q)$?
For example, if $p_0=\frac{1}3, p_1=\frac{2}3$, $q_0 = \frac{2}3$ and
$q_1=\frac{1}3$,
we do not know whether $B(q)$ is a monotone factor of $B(p)$.
Is it possible that there is a monotone factor that is also an isomorphism?
\end{question}

Motivated by Theorem~\ref{result} and the fact that Sinai's theorem
does not require the original space to be a Bernoulli shift, we
ask if the following monotone Sinai-type theorems are true.

\begin{question}
\label{monosinai}
Let $B(q)$ be a Bernoulli shift on $[N]$, and let
$\mu$ be an ergodic shift-invariant nonatomic measure on $[N]^{\mathbb{Z}}$
which stochastically dominates the product measure $q^{\mathbb{Z}}$,
and assume
that entropy of the system with the measure $\mu$ is no less than the
entropy of $B(q)$. Sinai's theorem gives that $B(q)$ can always
be obtained as a factor of the system with the measure $\mu$, but can
it be obtained as a~monotone factor?
\end{question}

%
\begin{question}
Suppose $\mu$ and $\nu$ are ergodic shift-invariant
nonatomic measures on $[N]^{\mathbb{Z}}$, where $\mu$ stochastically
dominates $\nu$. Assume that the system with the measure $\nu$ can
be obtained as a factor of the system with measure $\mu$; must there
exist a monotone factor? For example, the stationary bi-infinite Markov
process with transition probabilities given by $q_{00}=\frac{1}2=
q_{01},
q_{10}=\frac{2}3$ and $q_{11}=\frac{1}3$ can be obtained as a factor of
the Bernoulli
shift $B(p)$, where $p_0=p_1=\frac{1}2$ \cite{MR525312}, and it is also
easy to
see that associated Markovian measure $\nu$ is stochastically dominated by
the product measure $\mu=p^{\mathbb{Z}}$.
\end{question}

Let $R \subset[N] \times[N]$ be a relation on $[N]$. Let $p$ and $q$
be probability measures on~$[N]$. Motivated by Strassen's theorem and a
question raised by Gurel-Gurevich and Peled \cite{GGP}, Section~1.3, we
say that $p$ $R$-\textit{dominates} $q$ if there exists a probability
measure $\rho$ on
$[N] \times[N]$ which gives unit mass to set $R$, and has projections
equal to $p$ and $q$ on the first and second copies of $[N]$,
respectively. We call the measure $\rho$ an $R$-\textit{coupling}.

It is an interesting question of Gurel-Gurevich and Peled \cite{GGP}, Section~1.3,  who ask, in the general setting of Borel spaces
$(B_1, \rho_1)$ and $(B_1, \rho_2)$, for what relations $R \subset B_1
\times B_2$, does the existence of a $R$-coupling imply the existence
of a~\textit{deterministic} $R$-coupling; that is, a coupling $\rho$ for
which there exists a function $f \dvtx B_1 \to B_2$ such that
$\rho\{(x, f(x))\dvtx B_1\}=1$. We prove the following related result in
the more restricted context of Bernoulli factors.

%
\begin{theorem}
\label{relation}
Let $B(p)$ and $B(q)$ be Bernoulli shifts with symbols in $[N]$
(where one allows the possibility that
$p$ and $q$ give zero mass to some symbols). Let $R$ be any relation on
$[N]$. If the entropy of
$B(p)$
is strictly greater than that of $B(q)$, and the measure $p$
$R$-dominates the measure $q$, then there exists a factor $\phi$ from
$B(p)$ to $B(q)$ such that $(\mathrm{x}_0, \phi(\mathrm{x}_0)) \in R$
for all $\mathrm{x} \in[N]^{\mathbb{Z}}$.
\end{theorem}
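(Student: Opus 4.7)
The plan is to follow the scheme of the proof of Theorem \ref{result}, replacing the partial order on $[N]$ and its induced monotone couplings throughout by the general relation $R$ and the given $R$-coupling $\rho$ whose existence is assumed. The overall framework (Burton--Rothstein + Ball-style coupling + del Junco's quantile trick) carries over, but with $\rho$ playing the role of the quantile coupling and ``coordinatewise in $R$'' playing the role of ``monotone.''

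First I would define $\mathcal{J}_R$ to be the set of shift-invariant Borel probability measures on $([N]\times[N])^{\Z}$ whose two marginals are $p^{\Z}$ and $q^{\Z}$, and which assign full mass to $\{((\x_n,\y_n))_{n\in\Z} : (\x_n,\y_n)\in R \text{ for every } n\in\Z\}$. The $R$-domination hypothesis guarantees $\rho^{\Z}\in\mathcal{J}_R$, and $\mathcal{J}_R$ is easily seen to be convex and weak-$\ast$ compact; it plays the role of the space of monotone couplings in the proof of Theorem \ref{result}. Inside $\mathcal{J}_R$ sit the \emph{graph couplings}, i.e., measures concentrated on $\{(\x,\phi(\x)) : \x\in[N]^{\Z}\}$ for some shift-equivariant Borel $\phi$; these correspond exactly to relational factor maps of the type the theorem asserts. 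Arguing as in Burton--Rothstein, one shows that the graph couplings form a $G_\delta$ subset of $\mathcal{J}_R$, so by the Baire category theorem it suffices to prove their density.

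The density step is where the main technical work lies. Ball's monotone coupling (Section \ref{ball}) would be replaced by the randomized coupling $\rho^{\Z}$: given $\x\sim p^{\Z}$, sample $\y_n$ independently across $n\in\Z$ from the conditional law $b\mapsto \rho(\x_n,b)/p(\x_n)$; this produces $(\x,\y)\sim\rho^{\Z}$, a member of $\mathcal{J}_R$ with a strong product structure and independence properties analogous to Ball's. Given any target $\mu\in \mathcal{J}_R$ and any weak-$\ast$ neighborhood $U$ of $\mu$, one then adapts del Junco's quantile construction (Section \ref{delstar}) to turn this randomized coupling into a graph coupling lying in $U$. The strict entropy gap $H(p) > H(q)$ furnishes the slack needed for the associated block matching, which is carried out by the marriage argument (Section \ref{hall}) used in the proof of Theorem \ref{result}.

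The main obstacle I expect is the adaptation of the quantile step. In the monotone setting one has the canonical quantile coupling, which has strong ordering and independence properties that drive del Junco's construction; for a general relation $R$ no such canonical structure is available, so the $R$-couplings used on successive blocks must be chosen noncanonically and then glued consistently across blocks and scales, with the marriage theorem ensuring that the resulting $\y$-process approximates $q^{\Z}$ in distribution while remaining coordinatewise in $R$. Once density is established, the Baire category theorem yields a point of $\mathcal{J}_R$ that is simultaneously a graph coupling, and the corresponding shift-equivariant $\phi$ is the desired factor map with $(\x_0,\phi(\x)_0)\in R$ for all $\x$.
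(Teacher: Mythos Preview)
Your overall framework (Burton--Rothstein in the space of $R$-joinings, Ball-type block independence, del Junco's star-coupling, marriage lemma) matches the paper's, but two points in your proposal diverge from what actually happens, and one of them is a genuine gap.

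First, you propose to ``replace Ball's monotone coupling by $\rho^{\Z}$.'' That is not what Ball's construction does, and this substitution would break the density argument. Ball's alternating joining $\ball{\zeta}$ is built from an \emph{arbitrary} starting joining $\zeta$ by alternating between $\gamma=\zeta|_{[1,\kmark]}$ on most blocks and the single-coordinate coupling $\varrho$ on the short frozen stretches; the $\gamma$ blocks are precisely what makes $\ball{\zeta}$ weak-star close to $\zeta$ (Lemma~\ref{weak-star-ball}). If you simply use $\rho^{\Z}$, you land near $\rho^{\Z}$ regardless of $\zeta$, and you cannot prove density of almost-factors near an arbitrary $\zeta\in\mathcal J_R$. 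The paper's actual modification is minimal: keep Ball's alternating construction exactly as is, but replace only the single-coordinate quantile coupling $\varrho$ by the given $R$-coupling $\rho$. Everything in Section~\ref{ball} then goes through verbatim and produces an $R$-joining close to $\zeta$.

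Second, your anticipated ``main obstacle'' --- adapting del Junco's quantile step to a general $R$ --- is a non-issue, and the paper makes this point explicitly. The quantile coupling inside the star-coupling (Section~\ref{delstar}) is applied to \emph{block-level} conditional distributions, using lexicographic orderings on $[N]^{\kmark}$; it never touches the relation $R$ on $[N]$. What matters is that the star-coupling preserves each block's joint law, $(X_i',Y_i')\eqd(X_i,Y_i)$ (equation~\eqref{d-block-dist}), and that Proposition~\ref{sub} outputs a measure \emph{subordinate} to $\Gamma^{\kinitial}$. Both facts immediately imply that if the input block couplings are $R$-couplings, so are the outputs: subordination cannot create mass on pairs outside $R$, and equality in distribution preserves the support constraint. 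No noncanonical choices or new gluing arguments are needed; the del Junco machinery and the marriage lemma run unchanged, and the $R$-constraint is carried along for free.
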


We will see that the proof of Theorem~\ref{relation} does not require
any additional work; we will point out the necessary modifications to
the proof of Theorem~\ref{result}, and concentrate on the case of
stochastic domination.

\section{Background}
%

\subsection{The isomorphism problem for Bernoulli shifts}
\label{iso}
Let $(\mathrm{X}, \mu, T)$ and $(\mathrm{Y},  \nu, S)$ be
measure-preserving systems.
A factor $\phi \dvtx \mathrm{X} \to\mathrm{Y}$ is an \textit{isomorphism} if
$\phi^{-1} \dvtx  \mathrm{Y} \to\mathrm{X}$
is also a factor. A fundamental question in ergodic theory
is to ask when are two systems
isomorphic \cite{MR0122959,MR0304616}.
It was an open question whether the two Bernoulli shifts given by
$p=(\frac{1}2,\frac{1}2)$ and $q=(\frac{1}3,\frac{1}3,\frac{1}3)$ were
isomorphic, until
Kolmogorov gave a negative answer by introducing the idea of entropy
from statistical physics into ergodic theory and proving that
(Kolmogorov--Sinai) entropy is an isomorphism invariant \cite{MR2342699}.
Sinai's theorem and isomorphisms constructed for certain specific
cases by Me{\v{s}}alkin \cite{MR0110782}, \cite{MR832433}, page 181,
and Blum and Hansen \cite{MR0143862}
suggested that entropy could be a complete isomorphism-invariant for
Bernoulli shifts. Ornstein \cite{Ornstein,MR3052869} proved that
this was true; any two Bernoulli shifts of equal entropy are isomorphic.

\subsection{Joinings and Baire category}
It is an easy application of the Baire category theorem to prove
the existence of a continuous and nowhere differentiable function.
Burton and Rothstein
\cite{BurKeaSer,BurRot} had the nice idea to use the Baire
category theorem to give a unified treatment of three major results
in ergodic theory: Sinai's factor theorem \cite{Sinai}, Ornstein's
isomorphism theorem \cite{Ornstein}, and Krieger's generator theorem
\cite{Krieger1}, which states any given any nonatomic invertible ergodic
measure-preserving system with finite entropy less than $\log N$, the space
$[N]^{\mathbb{Z}}$ can be endowed with a shift-invariant measure that
makes it
isomorphic to the given system.

Let $(\mathrm{X}, \mu, T)$ and $(\mathrm{Y}, \nu, S)$ be
measure-preserving systems.
A \textit{coupling} of $\mu$ and $\nu$ is a measure (i.e., not
necessarily the product measure) on the product space
$\mathrm{X} \times\mathrm{Y}$ that has as its projections the measures
$\mu$ and $\nu$;
a coupling that is also invariant under $T \times S$
is a \textit{joining} \cite{Rue}.
The set of joinings is always nonempty because of the product measure,
and the set of joinings that are supported on a~subset of $\mathrm{X}
\times\mathrm{Y}$ that is a
graph, is exactly the set of factors! Burton and Rothstein's alternative
to explicitly constructing factors is to prove they form a residual (large)
subset in the set of joinings. Our proof of Theorem~\ref{result} will
take place in this setting.

A joining $\zeta$ of $\mu$ and $\nu$ is
\textit{monotone} if
\[
\zeta\bigl\{ (\mathrm{x},\mathrm{y}) \in[N]^{\mathbb{Z}} \times
[N]^{\mathbb{Z}} \dvtx \mathrm{x}_0 \geq\mathrm{y}_0
\bigr\}=1.
\]
If $\mu$ stochastically dominates $\nu$, then the space
of all ergodic monotone joinings of $\mu$ and $\nu$
is nonempty.
Our proof of Theorem~\ref{result} will proceed as follows. We will
give more precise definitions later; here we only try to give an idea
of the proof.
Given a monotone joining of $\mu$ and $\nu$, via a construction of
Ball, we will perturb it to another monotone joining with large amounts
of independence between blocks,
then via the coupling of del Junco, we will
perturb the resulting joining to obtain a~monotone joining that is
an $\varepsilon$-almost factor, one that is a factor except on a set of
measure less than $\varepsilon$. This will be the key ingredient that
will allow us to conclude in the weak-star topology (see Section~\ref{ws})
that the set of $\varepsilon$-almost factors is an open dense set for
every $\varepsilon$;
intersecting over all $\varepsilon$ and using the Baire category
theorem implies that the
resulting set is nonempty, and thus there exists a monotone factor.
Recently, we also used the Burton--Rothstein method to prove a
Krieger generator theorem for (nonhyperbolic) toral automorphisms \cite{QSb}.

\subsection{Finitary constructions}

Keane and Smorodinsky \cite{keanea,keaneb} strengthened the results
of Sinai and Ornstein by constructing factors that are
\textit{finitary};
that is, on a set of full measure the factors constructed by Keane and
Smorodinsky are continuous with respect to the product topology on the
space $[N]^{\mathbb{Z}}$ and thus have the property that for almost every
$\mathrm{x}\in[N]^\mathbb{Z}$, there is a $k$ such that if $x_i=x'_i$
for all $|i|\le k$,
then $\phi(x')_0=\phi(x)_0$. See also \cite{MR571401,MR2308235,MR633760,MR2306207,MR2876281,shea} for background
and recent developments with regards
to finitary factors. Let us also note that Ball's monotone factor
is also finitary \cite{Ball}, but the factor we construct will
\textit{not} be. It will be interesting to see if the construction in
\cite{MR2308235}
can be adapted to give monotone factors, since their construction in the
case where there is a strict entropy gap, $H(p) > H(q)$, has a
coding radius with exponential tails, so that the probability
that $k$ of the coordinates of $\mathrm{x}$ are insufficient to determine
the zeroth coordinate of the image decays to zero exponentially
fast as $k \to\infty$.

\begin{question}
Is Theorem~\ref{result} true with the additional requirement
that the factor be finitary?
\end{question}

\subsection{Unilateral constructions}
Sinai's original theorem also applies in the case where the original
nonatomic ergodic measure-preserving system $(\mathrm{X}, \mu, T)$ is
not invertible,
in which case,
any one-sided Bernoulli shift on $[N]^{\mathbb{N}}$ of no greater
entropy can be obtained
as a factor of $(\mathrm{X}, \mu, T)$. In particular, for the case of
Bernoulli shifts,
Sinai defined factor maps that are
\textit{unilateral} so that zeroth coordinate of the image of almost
every point $\mathrm{x} \in[N]^{\mathbb{Z}}$ depends only the future
coordinates
of $\mathrm{x}$, given by $(\mathrm{x}_i)_{i =0}^{\infty}$. Within the powerful
framework of Ornstein theory \cite{MR0447525}, Ornstein and Weiss \cite{MR0412386}
also
extended the one-sided version of the Sinai theorem to mixing Markov
chains with positive transitions, but their construction is not finitary;
see also the proof and extension\vadjust{\goodbreak} to all mixing Markov chains given by
Propp \cite{MR1164596}. In the case where both Bernoulli shifts
give nonzero mass to at least three symbols, del Junco \cite{Juncoa,Junco}
further strengthened the results of Keane and Smorodinsky by constructing
unilateral finitary factors and isomorphisms. Because del Junco was
interested in constructing unilateral factors, he defined what he called
the star-joining to replace the more combinatorial marriage theorem that
is used in the Keane and Smorodinsky proofs, but was not suitable for
the unilateral case.
Let us also note that the factor we construct will \textit{not} be unilateral.

\begin{question}
Is Theorem~\ref{result} true with the additional requirement
that the factor be unilateral?
\end{question}

\subsection{Point processes and monotone thinning}
Ornstein theory also extends to much more general spaces.
In particular, Ornstein and Weiss \cite{MR910005} proved that
any two (homogeneous) Poisson processes on $\mathbb{R}^{d}$ are isomorphic.
Note that a Poisson process on $\mathbb{R}^d$ is stochastically dominated
by a Poisson process on $\mathbb{R}^d$ of higher intensity, and given a
Poisson process on $\mathbb{R}^d$ selecting each point independently with
some probability fixed probability gives a Poisson process of
lower intensity; sometimes this is referred to as independent
(randomized) thinning.
In the case $d=1$, Ball \cite{MR2133893} proved that any Poisson
process can be obtained as a monotone factor of a Poisson process
of higher intensity; that is, as a translation-equivariant
(nonrandomized) function of the higher intensity process, a set of
points is removed so that the remaining set forms a Poisson
process of lower intensity.
Holroyd,
Lyons, and Soo \cite{MR2884878} extended this result to all dimensions $d$.
%
For Poisson processes on a finite volume, Angel, Holroyd and Soo
\cite{MR2736350} proved a necessary and sufficient condition on the two
intensities
for the existence of a nonrandomized
thinning. 
See also \cite{GGP} for the related question of
nonrandomized thickening, \cite{Evans,tom} for cases where
nonrandomized equivariant thinning is impossible and \cite{mester}
for a case where even a monotone invariant coupling is impossible.


\section{Some tools used in the proof}


\subsection{Markers}
\label{markers}
Let $\zeta$ be a joining of the two Bernoulli measures
$\mu=p^{\mathbb{Z}}$ and $\nu=q^{\mathbb{Z}}$ on $[N]^{\mathbb{Z}}$.
Suppose that $p_a, p_b >0$, where
$0 \leq a <b < N$. Let $k_{\mathrm{mark}}$ be a
large positive integer that we will fix later. Given $\mathrm{x} \in
[N]^{\mathbb{Z}} $,
for $n \in\mathbb{Z}$ we say that $[n,n+2k_{\mathrm{mark}}] \subset
\mathbb{Z}$ is a
\textit{marker}
if $\mathrm{x}_{n+i}=a$ for all $ 0 \leq i \leq 2k_{\mathrm{mark}}-1$
and $\mathrm{x}_{n+2k_{\mathrm{mark}}}=b$;
we call $n$ the \textit{left endpoint} and $n+ 2k_{\mathrm{mark}}$ the
\textit{right endpoint}.
Note that markers have been defined so that no two markers will intersect.
%


\subsection{The quantile coupling}
The law of a random variable $X$ is the measure given by $\mathbb{P}(X
\in\cdot)$, and
if $X$ is real-valued, its distribution function is given by $F(x)=F_X(x):=
\mathbb{P}(X \leq x)$ for all $x \in\mathbb{R}$.
The generalized inverse of a distribution function is given by $F^{-1}(y):=
\sup\{x\dvtx  F(x) < y\}$. If two random variables $X$~and~$Y$ have the
same law, then we write $X \stackrel{d}{=} Y$.

In probabilistic terms, a coupling
of two random variables $X$ and $Y$ is a pair of random variables
$X'$ and $Y'$ defined on the same probability space such that
$X' \stackrel{d}{=} X$ and $Y' \stackrel{d}{=} Y$.
If $X$ and $Y$ take values in finite sets
$A$ and $B$, then an element $x \in A$ is \textit{split} by the
coupling if there exist
distinct $y, z \in B$ such that $\mathbb{P}(X' =x, Y'=y) >0$ and
$\mathbb{P}(X' =x, Y'=z) >0$; given a subset $B' \subset B$, we say
that $x \in A$
is \textit{split in} $B'$ if there exist distinct $y, z \in B'$ such that
$\mathbb{P}(X' =x, Y'=y) >0$ and
$\mathbb{P}(X' =x, Y'=z) >0$. For a probability measure $\alpha$ on $A
\times B$,
we define splitting in a similar way.

The quantile coupling is defined in the following way. Let $X$ and $Y$
be two
real-valued random variables with distribution functions $F$ and $G$.
Let $U$ be uniformly distributed on the unit interval $[0,1]$. It is
easy to verify that
$X':=F^{-1}(U) \stackrel{d}{=} X$ and $Y':=G^{-1}(U) \stackrel{d}{=} Y$
and that if the law
of $X$ stochastically dominates the law of $Y$,
then $ X'\geq Y'$; see \cite{MR1741181}, Chapter~1, Section~3,  for details.

%
\begin{remark}
\label{useful}
A very useful property of the quantile coupling is that if $X$ and $Y$ take
values in finite sets $A$ and $B$, then under the quantile coupling at most
$\#B-1$ elements of $A$ are split.
\end{remark}
More generally, if $X$ is a random variable taking values in a totally ordered
complete space, then the distribution function $F(x) = \mathbb{P}(X\leq
x)$ and its
generalized inverse are well defined, so the quantile coupling applies.

\subsection{Marriage and coupling}
\label{hall}

Let $A$ and $B$ be finite sets. If $\alpha$ and $\alpha'$ are probability
measures on $A \times B$ such that
for all $x\in A$ and all $y\in B$ we have $\alpha(x,y)=0$
implies $\alpha'(x, y)=0$, then $\alpha'$ is absolutely continuous with
respect to $\alpha$, and we say that $\alpha'$ is
\textit{subordinate} to $\alpha$.

We will make use of the variation of Keane and
Smorodinsky's marriage theorem \cite{keanea}, Theorem~11,  stated in the
language of measures.

%
\begin{proposition}[(Keane and Smorodinsky)]
\label{sub}
Let $A$ and $B$ be finite sets. If $\alpha$~is a probability
measure on $A \times B$, then for all $B' \subset B$ there
exists a probability measure $\alpha'$ such that:
\begin{longlist}[(i)]
\item
\label{subordinate}
$\alpha'$ is subordinate to $\alpha$,
\item
\label{superposition}
$\alpha'(A, \cdot) = \alpha(A, \cdot)$ and
$\alpha'(\cdot, B) = \alpha(\cdot, B)$ and
\item
\label{refined}
$\alpha'$ splits at most $\#B'-1$ elements in $B'$.
\end{longlist}
\end{proposition}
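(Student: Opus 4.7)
The plan is to reduce the problem to modifying $\alpha$ only on the slice $A \times B'$ and then to apply a standard extreme-point/forest argument from the transportation polytope. First I would set $\alpha' = \alpha$ on $A \times (B \setminus B')$; this automatically preserves all row marginals contributed by that region and the column marginals $\alpha(A, y)$ for $y \notin B'$, and it trivially satisfies the subordination condition on that region. It then suffices to produce a measure $\beta$ on $A \times B'$ supported in $\mathrm{supp}(\alpha) \cap (A \times B')$ with column marginals $\beta(A, y) = \alpha(A, y)$ for $y \in B'$ and row marginals $\beta(\{x\}, B') = \alpha(\{x\}, B')$ for each $x \in A$. Taking $\alpha'$ to be $\alpha$ on $A \times (B \setminus B')$ plus $\beta$ on $A \times B'$ then delivers (i) and (ii), and splitting of $\alpha'$ inside $B'$ coincides with splitting of $\beta$, so (iii) reduces to controlling the splitting of $\beta$.

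The set $P$ of feasible $\beta$ is a nonempty bounded convex polytope in $\R^{A \times B'}$: it is cut out by the row-sum and column-sum equalities together with $\beta(x,y) \ge 0$ and the subordination constraints $\beta(x,y) = 0$ whenever $\alpha(x,y) = 0$, and it contains $\alpha$ restricted to $A \times B'$. I would pick any extreme point $\beta^{*}$ of $P$ and claim that its support, viewed as the edge set of a bipartite graph on $A \cup B'$, contains no cycle. The argument is the textbook one: any cycle in a bipartite graph has even length, so alternately adding and subtracting a sufficiently small $\e > 0$ along its edges produces two distinct feasible measures $\beta^{\pm} \in P$ with $\beta^{*} = \tfrac{1}{2}(\beta^{+} + \beta^{-})$, contradicting extremality. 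Hence $\mathrm{supp}(\beta^{*})$ is a forest.

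A short counting step then finishes the proof. Let $S$ be the number of $x \in A$ with degree at least $2$ in $\mathrm{supp}(\beta^{*})$ (these are precisely the elements split in $B'$), and let $U$ be the number with degree exactly $1$. The forest has at most $(S + U) + \#B'$ vertices and at least one connected component, so at most $(S + U) + \#B' - 1$ edges. On the other hand, summing degrees from the $A$-side, the number of edges is at least $2S + U$. Comparing gives $S \le \#B' - 1$, which is (iii).

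The main obstacle is the extreme-point argument showing that the support of $\beta^{*}$ is a forest; everything else is bookkeeping and elementary counting. As an alternative one could produce $\beta^{*}$ directly by running a greedy transportation algorithm (a Northwest-corner type rule on $A \times B'$ using the support of $\alpha$ as the allowed edge set), which by construction yields a tree-like support, but the polytope viewpoint is cleaner and avoids checking the algorithm's correctness in detail.
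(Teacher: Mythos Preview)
Your argument is correct. The reduction to the slice $A\times B'$ is clean, the polytope $P$ is indeed nonempty (it contains $\alpha|_{A\times B'}$), the cycle-killing argument showing an extreme point has forest support is the standard one, and your edge-count $2S+U\le (S+U)+\#B'-1$ gives $S\le\#B'-1$ directly. One tiny remark: the ``at least one connected component'' step tacitly assumes $\#B'\ge 1$ (otherwise the bound $\#B'-1$ is $-1$), but this edge case is vacuous anyway.

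Your route, however, is genuinely different from the paper's. The paper does not give a self-contained argument but instead cites Ball's variant \cite[Lemma~6.1]{Ball} of the Keane--Smorodinsky society lemma \cite[Theorem~11]{keanea} together with \cite[Lemma~3.2]{Ball}; those results are proved via Hall's marriage theorem and an iterative matching construction rather than via the geometry of the transportation polytope. Your extreme-point approach is arguably more transparent here: it avoids invoking Hall's condition and the auxiliary matching machinery, and the forest structure of vertices of the transportation polytope yields the splitting bound in one line. The marriage-theorem route, on the other hand, is the one that historically generalizes to the iterative refinements used in the Keane--Smorodinsky and Ball finitary constructions, which is presumably why the paper points to it.
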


The proposition follows immediately from Ball's variation (\cite{Ball},
Lemma~6.1)
of \cite{keanea}, Theorem~11, and \cite{Ball}, Lemma~3.2. For more information,
see \cite{MR525312}, Section~4 and \cite{MR1073173}, Section~6.5, in Karl
Petersen's textbook; in particular, see \cite{MR1073173}, Chapter~6, Lemma~5.13
for a discussion of the relation between \cite{keanea}, Theorem~11 and the usual
Hall marriage theorem \cite{Hall01011935}.

\begin{remark}
\label{coupling-super}
Note that in Proposition~\ref{sub} that if
$\alpha:=\mathbb{P}(X \in\cdot,  Y \in\cdot)$ is the joint
distribution of random
variables $X$ and $Y$ taking values in $A$ and $B$, respectively, then by
\eqref{superposition} the probability measure $\alpha'$ given by the
proposition is a coupling of $X$ and $Y$. Moreover, by \eqref{subordinate},
if $A$ and $B$ are subsets of a poset and $\alpha$ is a monotone coupling
of $X$ and $Y$, then so is $\alpha'$.
\end{remark}

\subsection{Weak-star metric}
\label{ws}

Let $N >0$. For $i\geq0$, let $\mathcal C_i$ be the set of measurable
$C \subset [N]^{\mathbb{Z}} \times[N]^{\mathbb{Z}}$ that only depend
on the coordinates
$j\in[-i,i]$, so that $\mathrm{z} \in C$ implies that $\mathrm{z}' \in
C$ if
$z_j = z'_j$ for all $j \in[-i,i]$. We define the
\textit{weak-star}
metric $d^{*}$ on the space of measures on $[N]^{\mathbb{Z}}
\times[N]^{\mathbb{Z}}$ by\vspace*{-3pt} setting
\[
d^{*} (\zeta, \xi):= \sum_{i=0}^ \infty
2^{-(i+1)} \sup_{C \in\mathcal C_i} \bigl|\zeta(C) - \xi(C)\bigr|.
\]
The metric $d^{*}$ generates the usual weak-star topology,
and convergence in this topology is equivalent to what is sometimes
referred to as \textit{weak} convergence in probability theory \cite{Rud}, (7.4),
\cite{Dudley}, Chapter~11.

\subsection{Ball's joining}
\label{ball}

For $A \subset\mathbb{Z}$ and $\mathrm{x} \in[N]^{\mathbb{Z}}$, we
let $\mathrm{x}|_A \in
[N]^A$ denote $\mathrm{x}$ restricted to the elements of $A$. Also let
$(\mathrm{x}, \mathrm{y})
|_A= (\mathrm{x}|_A, \mathrm{y}|_A)$.
For the measure $\zeta$ on $[N]^{\mathbb{Z}} \times[N]^{\mathbb{Z}} $
and any
$A \subset \mathbb{Z}$, we let $\zeta|_{A}$ denote
the measure $\zeta$ restricted to $[N]^A \times[N]^A$, so that
for all measurable $F \subset [N]^A \times[N]^A$, we have
$\zeta|_{A} (F) = \zeta(F')$, where
$F':= \{ (\mathrm{x}, \mathrm{y})\dvtx  (\mathrm{x}|_A, \mathrm{y}|_A )\in
F\}$.
Sometimes we will refer to $\zeta|_{A}$
simply as $\zeta$ restricted to $A$.

Ball \cite{Ball}, pages 214--215,  defines a joining of two
Bernoulli shifts that has certain useful independence properties.
Let $p$ and $q$ be probability measures on $[N]$, where $p$ stochastically
dominates $q$. Let $\varrho$ be the quantile (monotone) coupling of $p$
and $q$.
Let $\zeta$ be an arbitrary ergodic monotone joining of $p^{\mathbb
{Z}}$ and $q^{\mathbb{Z}}$.
Then let $\gamma$ be the monotone coupling of the finite product measures
$p^{k_{\mathrm{mark}}}$ and $q^{k_{\mathrm{mark}}}$ given by
$\gamma=\gamma_{\zeta}:= \zeta|_{[1,k_{\mathrm{mark}}]}$.
Here let
$k_{\mathrm{mark}}$, $a$, and $b$ be as in Section~\ref{markers}. We
define a~monotone coupling of $p^{\mathbb{N}}$ and $q^{\mathbb{N}}$ by
alternating\vspace*{1pt} between
$\gamma$ and $\varrho$ in the following way. If
$Z=({X}, {Y})$ has law $\gamma$ and
${X}=(a, \ldots, a)=a^{k_{\mathrm{mark}}}$,
or if $Z=(X,Y)$ has law $\varrho$ and
$X\neq a$, then we say that a \textit{switch} occurs.
Let $\dot{\zeta}$ be given by sampling from $\gamma$
independently until a switch occurs, afterwards, sample from
$\varrho$ until a switch occurs; by switching back\vspace*{1pt} and forth
between $\gamma$ and $\varrho$ we obtain a monotone coupling
$\dot{\zeta}$ of $p^{\mathbb{N}}$ and $q^{\mathbb{N}}$.

To see that $\dot{\zeta}$ is, in fact, a coupling of $p^{\mathbb{N}}$ and
$q^{\mathbb{N}}$, let $k,n \geq1$ and $\alpha$ be a coupling of $p^k$ and
$q^k$, and $\beta$ be a coupling of $p^n$ and $q^n$. Observe that
if $W:=(W_i)_{i \in\mathbb{N}}$ is an i.i.d. sequence of random variables
with law $\alpha$ and $(R_i)_{i \in\mathbb{N}}$ is an i.i.d. sequence of
random variables with law $\beta$, then for any finite deterministic
$\ell$ the random variable given by $Z_{\ell}:=(W_1, \ldots, W_{\ell},
R_1, R_2, \ldots)$ is a coupling of $p^{\mathbb{N}}$ and $q^{\mathbb
{N}}$. Furthermore,
if $L$ is a stopping time for $W$, so that for all positive integers
$\ell$
the event $\{L \leq\ell\}$ belongs to the sigma-algebra generated by
$(W_i)_{i=1}^{\ell}$, then it is also true that $Z_L$ is a coupling of
$p^{\mathbb{N}}$ and $q^{\mathbb{N}}$. Since the switches also are
stopping times,
the result follows from repeated applications of this\vadjust{\goodbreak} simple observation.

For $\dot{\zeta}$, since a marker consists of $2k_{\mathrm{mark}}$
$a$'s followed by a $b$, we see that
no matter where the marker starts relative to the switches,
where the $b$ occurs the $\varrho$ coupling is used and a switch occurs,
so that $\dot{\zeta}$ restricted to the following interval of size
$k_{\mathrm{mark}}$ will
always be obtained from the $\gamma$ coupling.\vspace*{1pt}
Recall that we defined $S$ to be the left-shift. By first
stationarizing $\dot{\zeta}$, by setting
$\ddot{\zeta}:= \lim_{n \to\infty} \frac{1}{n}
\sum_{i=1}^n \dot{\zeta} \circ S^{i}$,
where the limit is taken in the weak-star topology, and then taking the
natural extension (for details see, e.g., \cite{down}, Section~4.3)
of $\ddot{\zeta}$ to $[N]^{\mathbb{Z}} \times[N]^{\mathbb{Z}}$, we
obtain a monotone ergodic joining of $p^{\mathbb{Z}}$ and $q^{\mathbb
{Z}}$, which we
denote by ${\zeta}_{\mathrm{alt}}$ and refer to as the \textit
{alternating} joining.
By the above observation, once we see a marker,
then we can determine (using the $\mathrm{x}$ variable alone)
which of $\varrho$ and $\gamma$ is being used for all coordinates to
the right.
Since with probability one, there are markers to the left of any point,
we see that
almost surely we can, by looking at the $\mathrm{x}$ variable, decide
which of $\varrho$ and
$\gamma$ is being used at each coordinate.
In Ball's paper, the coupling $\gamma$ is defined to satisfy additional
properties that she needs for her argument, but are not needed here.

The joining ${\zeta}_{\mathrm{alt}}$ has the following property.
For a given $\mathrm{x} \in[N]^{\mathbb{Z}}$ we define a bi-infinite
sequence of
\textit{alternating} intervals $\mathbf{K}(\mathrm{x})=(I_i)_{i \in
\mathbb{Z}}$
that partition $\mathbb{Z}$ into
intervals of length $k_{\mathrm{mark}}$ and $1$ in the following way. Locate
all the markers of $\mathrm{x}$. Any $n\in\mathbb{Z}$ that belongs to
the right endpoint
of a marker is an interval of length $1$, following a marker
will always be an interval of length $k_{\mathrm{mark}}$, and if
$\mathrm{x}$ restricted
to the interval of length $k_{\mathrm{mark}}$ is not a string of
$k_{\mathrm{mark}}$
consecutive $a$'s, then the following interval will also be one of
length $k_{\mathrm{mark}}$, otherwise, the following intervals will all
be of
length $1$, until a symbol that is not $a$ occurs;
the following interval will be one of length $k_{\mathrm{mark}}$.

Let $\Gamma= \Gamma_{\zeta}$ be the measure $\zeta
|_{[1,k_{\mathrm{mark}}]}$
conditioned so that a switch does not occur. A random variable with law
$\Gamma$ takes values in $[N]^{k_{\mathrm{mark}}} \times[N]^{k_{\mathrm
{mark}}}$.
For an interval $I \subset\mathbb{Z}$ of size $k$, we will often make
the identification $[N]^I \equiv[N]^k$.

%
\begin{proposition}[(Ball)]
\label{alt-ball}
Let $\zeta$ be an ergodic monotone joining of two Bernoulli
measures $\mu$ and $\nu$.
The alternating joining ${\zeta}_{\mathrm{alt}}$ is another ergodic
monotone joining of $\mu$
and $\nu$. If $\mathrm{Z}=(\mathrm{X}, \mathrm{Y})$ has law ${\zeta
}_{\mathrm{alt}}$, then conditional on the
alternating intervals $\mathbf{K}(\mathrm{X}) = (I_i)_{i \in\mathbb{Z}}$,
the random variable $\mathrm{Z}$ has the following properties:
\begin{itemize}
\item
The random variables $(\mathrm{Z}|_{I_i})_{i \in\mathbb{Z}}$ are independent.
\item
On each alternating interval $I$ of size $1$ not immediately
to the left of an interval
of size $k_{\mathrm{mark}}$, the law of $\mathrm{Z}|_I=(a, \mathrm
{Y}|_I)$ is $\varrho$ conditioned
on the event a switch does not occur, otherwise the law of $\mathrm
{Z}|_I$ is
$\varrho$ conditioned on the event that a switch occurs.
\item
On each alternating interval $I$ of size $k_{\mathrm{mark}}$ that is
not immediately left of an
interval of size $1$, the law of $\mathrm{Z}|_I$ is $\Gamma$ (a switch
does not occur); otherwise it is $\gamma$ conditioned so that a switch
does occur.
\end{itemize}
\end{proposition}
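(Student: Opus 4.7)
The plan is to verify the four claims—joining, monotonicity, ergodicity, and the conditional structure on the alternating intervals—by first analysing the one-sided process $\dot{\zeta}$ directly from its recursive definition, then passing to the stationarization $\ddot{\zeta}$, and finally to its natural extension $\ball{\zeta}$. The marginals and monotonicity of $\dot{\zeta}$ both follow from the observations already made in the construction: switches are stopping times for $\X$ alone, and both $\gamma$ (as a restriction of the monotone joining $\zeta$) and $\varrho$ (the quantile coupling) are monotone couplings with the correct marginals. Alternating between them at stopping times preserves marginals by the repeated strong-Markov argument given in the text, while monotonicity is preserved coordinate-by-coordinate. Both properties then pass through weak-star Cesaro averaging and natural extension.

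The key structural observation is that markers act as synchronising events for the alternation dynamics. If $[n, n+2\kmark]$ is a marker of $\x$, then within the $a$-run $[n, n+2\kmark-1]$ the process must enter $\varrho$-mode: any complete $\gamma$-block contained in $[n, n+2\kmark-1]$ has $\X$-component $a^{\kmark}$ and therefore triggers a switch to $\varrho$-mode, and at least one such complete $\gamma$-block occurs inside the $a$-run regardless of the phase at position $n$. Once in $\varrho$-mode, each sample has $\X = a$ and does not switch, so the process remains in $\varrho$-mode until position $n+2\kmark$, where $\X = b$ forces a switch back to $\gamma$-mode. Consequently the position $n + 2\kmark + 1$ almost surely begins a fresh $\gamma$-block, and the abstract partition $\mathbf{K}(\x)$ defined in the text coincides almost surely with the succession of $\gamma$- and $\varrho$-samples actually used by the construction.

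This synchronisation yields both the conditional structure and ergodicity. Conditionally on $\mathbf{K}(\X) = (I_i)_{i \in \Z}$, each $I_i$ corresponds to an i.i.d.\ draw from $\gamma$ if $|I_i| = \kmark$ or from $\varrho$ if $|I_i| = 1$, and these draws are independent across intervals because in the construction the samples between stopping times are drawn independently. Conditioning further on the length of $I_{i+1}$, which records whether a switch occurred at the right end of $I_i$, gives exactly the conditional laws stated in the proposition: $\gamma$ conditioned on $\X \neq a^{\kmark}$ (that is, $\Gamma$) when no switch occurs at the end of a size-$\kmark$ interval, $\gamma$ conditioned on $\X = a^{\kmark}$ when a switch does occur, and analogously $\varrho \mid \X=a$ or $\varrho \mid \X \neq a$ in the size-$1$ case. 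Ergodicity of $\ball{\zeta}$ then follows because consecutive markers split the process into i.i.d.\ excursions, markers occur with positive density under $p^{\Z}$ by the ergodic theorem, and natural extension preserves ergodicity.

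The main obstacle I expect is the clean handling of the Cesaro stationarization: one must show that $\ddot{\zeta} = \lim_n \frac{1}{n} \sum_{i=1}^n \dot{\zeta}\circ S^i$ exists in the weak-star topology, has marginals $p^{\Z}$ and $q^{\Z}$, and that the conditional description above continues to hold after shifting the origin. The renewal structure induced by markers makes this tractable: the process between consecutive markers is i.i.d., so the Palm description of the stationary limit reduces to averaging over a single marker-to-marker excursion, and all the claimed properties can be read off from that excursion.
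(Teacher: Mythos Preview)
Your proposal is correct and aligns with the paper's approach, which is simply the one-line assertion ``The result follows from the definition of $\ball{\zeta}$.'' You have essentially written out the details that the paper leaves implicit in the construction of \S\ref{ball}: the stopping-time argument for the marginals, the coordinatewise monotonicity of $\gamma$ and $\varrho$, the marker-synchronisation that identifies $\mathbf{K}(\x)$ with the actual alternation pattern, and the regenerative (i.i.d.-excursion) structure between markers that yields ergodicity---all of which the paper either states without proof in the construction or regards as immediate.
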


\begin{pf}
The result follows from the definition of ${\zeta}_{\mathrm{alt}}$.
\end{pf}

Let $\zeta$ be a monotone joining of Bernoulli measures. Let $k_{\mathrm
{mark}} >0$,
and ${\zeta}_{\mathrm{alt}}$ be the associated alternating joining. Let
$(\mathrm{x},\mathrm{y})$ be in the
support of ${\zeta}_{\mathrm{alt}}$. For each $n \in\mathbb{Z}$,
we say $n$ is \textit{frozen} if $n$ belongs an alternating interval in
$\mathbf K(\mathrm{x})$
of size $1$
or an alternating interval of size $k_{\mathrm{mark}}$ where a switch
occurs. Similarly,
we say that any alternating interval of size $1$ is \textit{frozen} and
any alternating
interval where a switch occurs is \textit{frozen}. We say that any
coordinate or
alternating interval that is not frozen is \textit{free}.

%
\begin{lemma}
\label{frozen}
Let $\zeta$ be a monotone joining of two Bernoulli measures $\mu=
p^{\mathbb{Z}}$ and $\nu= q^{\mathbb{Z}}$.
Given $k_{\mathrm{mark}}$, let ${\zeta}_{\mathrm{alt}}$ be the
associated alternating joining.
For $k_{\mathrm{mark}}$ sufficiently large the
probability that an integer $n \in\mathbb{Z}$ belongs to a frozen
interval can be
made arbitrary small.
\end{lemma}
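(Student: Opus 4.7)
The plan is to fix $n=0$ by stationarity of $\ball{\zeta}$ and then decompose the event that $0$ is frozen according to the three mechanisms allowed by the construction of $\mathbf{K}(\x)$ and of the switches. A key preliminary observation is that both $\mathbf{K}(\x)$ and the event of a switch inside any given length-$\kmark$ alternating interval (which by the switch rule in $\gamma$-mode is simply the event that $\x$ restricted to that interval equals $a^{\kmark}$) depend only on $\x$. Hence the probability in question can be computed under the first marginal $\mu=p^{\Z}$, under which $\x$ is i.i.d.\ with law $p$.

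I would then split the event ``$0$ is frozen'' into three cases: (A) $0$ is the right endpoint of a marker; (B) $0$ lies in a length-$1$ alternating interval arising from the ``all-$a$'' rule, i.e., produced by an all-$a$ length-$\kmark$ interval lying to the left of $0$; and (C) $0$ lies in a length-$\kmark$ alternating interval on which a switch occurs. For (A) the defining pattern $\x_{-2\kmark}=\cdots=\x_{-1}=a$, $\x_0=b$ has probability exactly $p_a^{2\kmark}p_b$. For (B), tracing the definition of $\mathbf{K}(\x)$ shows there must be an all-$a$ length-$\kmark$ alternating interval strictly to the left of $0$ followed by an unbroken run of $a$'s reaching $0$, which forces $\x_{-\kmark},\ldots,\x_{-1}$ all to equal $a$, giving probability at most $p_a^{\kmark}$. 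For (C), the length-$\kmark$ interval containing $0$ has some starting position $j\in\{-\kmark+1,\ldots,0\}$ and consists entirely of $a$'s, so a union bound over the $\kmark$ possible values of $j$ yields probability at most $\kmark\, p_a^{\kmark}$.

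Since the standing assumption $p_a,p_b>0$ with $a\ne b$ forces $p_a<1$, all three bounds tend to $0$ as $\kmark\to\infty$, and a final union bound gives $\P(0 \text{ is frozen})\to 0$; by stationarity this is the claim for arbitrary $n$.

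The main obstacle, though minor, is a disciplined reading of the alternating-interval construction so that the three cases are genuinely exhaustive and the bound in case (B) is correctly justified — specifically, confirming that the chain of length-$1$ intervals really forces at least $\kmark$ consecutive $a$'s immediately preceding $0$, regardless of how far back the chain originated. Once the case analysis is set up correctly, the probability estimates are routine i.i.d.\ calculations.
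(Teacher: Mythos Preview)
Your proof is correct. Your approach differs from the paper's mainly in how the length-$1$ intervals are handled: the paper sets $F_i=\{i$ is a length-$1$ alternating interval$\}$ and uses stationarity together with the one-step decomposition $\P(F_1)=\P(F_1\mid F_0)\P(F_0)+\P(F_1\mid F_0^c)\P(F_0^c)$ to deduce $\P(F_1)\le p_a^{\kmark-1}$, whereas you observe directly that case~(B) forces $\x_{-\kmark}=\cdots=\x_{-1}=a$ and hence has probability at most $p_a^{\kmark}$. Your route is more elementary and sidesteps any conditional computation, at the mild cost of splitting the length-$1$ case into two sub-cases; the paper's argument bundles both marker endpoints and the all-$a$ run into the single event $F_0$. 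For the length-$\kmark$ switch case you take a union bound over the $\kmark$ possible starting positions, giving $\kmark\,p_a^{\kmark}$, while the paper states the conditional probability of a switch given membership in a $\kmark$-interval is $p_a^{\kmark}$; your bound is cruder but entirely self-contained and still decays to zero.
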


\begin{pf}
Note that if the origin is in an alternating interval of size
$k_{\mathrm{mark}}$,
then the probability that this interval is a switch is exactly
$p_a^{k_{\mathrm{mark}}}$,
which goes to zero as $k_{\mathrm{mark}} \to\infty$.
A simple calculation will show that the probability that origin is in an
alternating interval of size one can be made arbitrarily small.
Let $F_i$ be the event that $i \in\mathbb{Z}$ is an alternating
interval of size $1$.
Note that $\mathbb{P}(F_0) = \mathbb{P}(F_1)$. We have that
\begin{eqnarray*}
\mathbb{P}(F_1) &=& \mathbb{P}(F_1 | F_0)
\mathbb{P}(F_0) + \mathbb{P}\bigl(F_1 |
F_0^c\bigr)\mathbb{P}\bigl(F_0^c
\bigr)
\\
&=& \mathbb{P}(F_1 | F_0)\mathbb{P}(F_1)
+ \mathbb{P}\bigl(F_1 | F_0^c\bigr)
\mathbb{P}\bigl(F_0^c\bigr)
\\
& \leq& (1-p_a)\mathbb{P}(F_1) + p_a^{k_{\mathrm{mark}}}
\mathbb{P}\bigl(F_0^c\bigr);
\end{eqnarray*}
thus $\mathbb{P}(F_1) \leq p_a^{k_{\mathrm{mark}}-1}$.
\end{pf}

%
\begin{lemma}
\label{weak-star-ball}
Let $\zeta$ be a monotone joining of two Bernoulli measures $\mu=
p^{\mathbb{Z}}$ and $\nu= q^{\mathbb{Z}}$. For any $\varepsilon>0$,
there exists
$k_{\mathrm{mark}}$ sufficiently large so that \mbox{$d^{*}(\zeta, {\zeta
}_{\mathrm{alt}}) < \varepsilon$}.
\end{lemma}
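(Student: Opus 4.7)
The idea is that for any cylinder set $C$ depending only on coordinates in $[-i_0,i_0]$, the value $\ball{\zeta}(C)$ is close to $\zeta(C)$ because, for large $\kmark$, the origin sits deep inside a single free alternating interval of length $\kmark$ with $\ball{\zeta}$-probability close to $1$, and on such an interval Proposition \ref{alt-ball} identifies the law as $\Gamma$, which is essentially $\gamma=\zeta\ronn_{[1,\kmark]}$, a restriction of $\zeta$ itself.

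Fix $\e>0$ and pick $i_0$ so that $2^{-i_0}<\e/2$. Since $\mathcal{C}_i\subset\mathcal{C}_{i_0}$ for $i\le i_0$, the tail of the weak-star sum past $i_0$ contributes at most $\e/2$, so it suffices to find $\kmark$ with $\sup_{C\in\mathcal{C}_{i_0}}|\zeta(C)-\ball{\zeta}(C)|<\e/2$. Let $G$ denote the $\ball{\zeta}$-event that the alternating interval $I\in\mathbf{K}(\X)$ containing $0$ has length $\kmark$, is free (not a switch), and satisfies $[-i_0,i_0]\subset I$. Lemma \ref{frozen} shows that for $\kmark$ large the $\ball{\zeta}$-probability that the interval containing $0$ is frozen is arbitrarily small. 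Conditional on the interval containing $0$ being free of length $\kmark$, a short size-biasing argument using stationarity of $\ball{\zeta}$ shows that the position of $0$ within this interval is uniform on $\{0,\ldots,\kmark-1\}$, so $[-i_0,i_0]\subset I$ with conditional probability $1-2i_0/\kmark$. Hence $\ball{\zeta}(G^c)<\e/4$ for $\kmark$ sufficiently large.

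On the event $G$, Proposition \ref{alt-ball} gives that, conditional on $\mathbf{K}(\X)$, the law of $\Zz\ronn_I$ is $\Gamma$, which equals $\gamma=\zeta\ronn_{[1,\kmark]}$ conditioned on the no-switch subevent of $\gamma$-probability $1-p_a^{\kmark}$; so the total variation distance between $\gamma$ and $\Gamma$ is at most $2p_a^{\kmark}$. By shift-invariance of $\zeta$, the marginal of $\gamma$ on the translate of $[-i_0,i_0]$ inside $[1,\kmark]$ coincides with $\zeta\ronn_{[-i_0,i_0]}$. Combining, for any $C\in\mathcal{C}_{i_0}$,
\[
|\zeta(C)-\ball{\zeta}(C)|\le \ball{\zeta}(G^c)+2p_a^{\kmark}<\tfrac{\e}{2}
\]
for $\kmark$ sufficiently large, giving $\metricstar(\zeta,\ball{\zeta})<\e$. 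The main bookkeeping obstacle is justifying the uniform-position claim used to bound $\ball{\zeta}(G^c)$: letting $L_n$ be the indicator that $n$ is the left endpoint of a free alternating interval of length $\kmark$, stationarity forces $\P(L_n=1)$ to be independent of $n$, and the events $\{L_{-k}=1\}$ for $k=0,\ldots,\kmark-1$ partition the event that $0$ lies in a free interval of length $\kmark$, so each has equal conditional probability $1/\kmark$.
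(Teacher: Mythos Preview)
Your proof is correct and follows the same approach as the paper's: condition on the event $G$ that $[-i_0,i_0]$ lies inside a single alternating interval of size $\kmark$, identify the conditional law there via Proposition~\ref{alt-ball}, and bound $\ball{\zeta}(G^c)$ using Lemma~\ref{frozen}. You are in fact more careful than the paper on two points it leaves implicit: you explicitly control the total-variation gap between $\Gamma$ and $\gamma=\zeta\ronn_{[1,\kmark]}$ (the paper simply asserts $\P(\Zz'\in C\mid G)=\P(\Zz\in C)$), and you justify via stationarity that the origin's position within its free interval is uniform, which is what is actually needed to conclude $[-i_0,i_0]\subset I$ with high probability.
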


\begin{pf}
It suffices to show that for any integer $n>0$ and $\varepsilon>0$,
there exists
a~$k_{\mathrm{mark}}$ sufficiently large such that $|\zeta(C) - {\zeta
}_{\mathrm{alt}}(C)| < \varepsilon$
for all $C \in\mathcal{C}_n$.

Let $\mathrm{Z}$ and $\mathrm{Z}'$ be random variables with laws $\zeta
$ and ${\zeta}_{\mathrm{alt}}$,
respectively. Take $k_{\mathrm{mark}} >2n+1$. Let $G$ be the event
(measurable with
respect to $\mathrm{Z}'$) such that the interval $[-n, n]$ is contained
in an
alternating interval of size $k_{\mathrm{mark}}$, and $G^c$ denote the
complement. We have
\[
\mathbb{P}\bigl(\mathrm{Z}' \in C\bigr) = \mathbb{P}\bigl(
\mathrm{Z}' \in C  |  G\bigr)\mathbb{P}(G) + \mathbb{P}\bigl(
\mathrm{Z}' \in C  |  G^c\bigr)\mathbb{P}
\bigl(G^c\bigr),
\]
for all $C \in\mathcal{C}_n$.
By Proposition~\ref{alt-ball}, $\mathbb{P}(\mathrm{Z}' \in C  |  G) =
\mathbb{P}(\mathrm{Z} \in C)$.
By Lemma~\ref{frozen}, we can also choose $k_{\mathrm{mark}}$ so that
$\mathbb{P}(\mathrm{Z}' \in G) > 1 - \varepsilon/2$.
\end{pf}

\subsection{The Shannon--McMillan--Breiman theorem}
\label{SMB}
The Shannon--McMillan--Breiman theorem \cite{MR0092710} states that for
an ergodic\vspace*{1pt} invariant
measure $\mu$ on $[N]^{\mathbb{Z}}$ with entropy $h(\mu)$, for $\mu$-almost every
$\mathrm{x} \in[N]^{\mathbb{Z}}$, we have
\[
-\lim_{n \to\infty}\frac{1}{n} \log\mu\bigl(C_n(
\mathrm{x})\bigr)= h(\mu),
\]
where $C_n(\mathrm{x}):= \{\mathrm{x}' \in[N]^{\mathbb{Z}}\dvtx \mathrm{x}
|_{[0,n)} =
\mathrm{x}' |_{[0,n)}\}$.

Let $\zeta$ be an ergodic monotone joining of Bernoulli measures.
Recall that $\Gamma$ was $\zeta
|_{[1,k_{\mathrm{mark}}]}$
conditioned\vspace*{1pt} so that a switch does not occur.
Consider the identification $[N]^{k_{\mathrm{mark}}}
\equiv[N^{k_{\mathrm{mark}}}]$, and
the
measure ${\zeta}_{\mathrm{fill}}$ on $[N^{k_{\mathrm{mark}}}]^{\mathbb
{Z}} \times[N^{k_{\mathrm{mark}}}]^{\mathbb{Z}}$
given by $\Gamma_{\zeta}^{\mathbb{Z}}$.
Let ${\mu}_{\mathrm{fill}}$ and ${\nu}_{\mathrm{fill}}$ be the
respective projections of
${\zeta}_{\mathrm{fill}}$. Note that ${\mu}_{\mathrm{fill}}$ and ${\nu
}_{\mathrm{fill}}$
are Bernoulli measures.\vspace*{-1pt}

%
\begin{lemma}
\label{mono-entropy}
Let $\zeta$ be a monotone joining of two Bernoulli measures $\mu
=p^{\mathbb{Z}}$ and $\nu=q^{\mathbb{Z}}$.
Suppose that $H(p) > H(q)$. For $k_{\mathrm{mark}}$ sufficiently large,
we have
$h({\mu}_{\mathrm{fill}}) > h({\nu}_{\mathrm{fill}})$.\vspace*{-1pt}
\end{lemma}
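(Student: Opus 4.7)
The plan is to compute $h(\filler\mu)$ exactly and bound $h(\filler\nu)$ from above by quantities that differ essentially by a factor of $\kmark$, then exploit the entropy gap $H(p) - H(q) > 0$.

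Since $\filler{\mu}$ and $\filler{\nu}$ are Bernoulli (product) measures on $\nset{N^{\kmark}}^{\Z}$, their Kolmogorov--Sinai entropies equal the Shannon entropies of their one-symbol marginals, that is, of the two marginals of $\Gamma_\zeta$ on $\nset{N}^{\kmark}$. Recall that $\Gamma_\zeta$ is $\zeta\ronn_{[1,\kmark]}$ conditioned on the non-switch event $E := \{\X \ne a^{\kmark}\}$, where under $\zeta\ronn_{[1,\kmark]}$ one has $\X \sim p^{\kmark}$ and $\Y \sim q^{\kmark}$, with $\P(E^c) = p_a^{\kmark}$. Write $H_2(t) := -t\log t - (1-t)\log(1-t)$ for the binary entropy of a Bernoulli$(t)$ variable.

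For the first marginal, apply the chain rule to the pair $(\X, \mathbf 1_{E^c})$: since $\mathbf 1_{E^c}$ is a function of $\X$,
\[
\kmark H(p) = H(\X) = H(\mathbf 1_{E^c}) + H(\X \mid \mathbf 1_{E^c}) = H_2(p_a^{\kmark}) + (1 - p_a^{\kmark})\, h(\filler\mu),
\]
because the conditional distribution of $\X$ given $E$ is precisely the first marginal of $\Gamma_\zeta$ while the conditional distribution given $E^c$ is degenerate. Solving gives $h(\filler\mu) = [\kmark H(p) - H_2(p_a^{\kmark})]/(1-p_a^{\kmark})$.

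For the second marginal, the same decomposition applied to $(\Y, \mathbf 1_{E^c})$ combined with the fact that conditioning reduces entropy yields
\[
\kmark H(q) = H(\Y) \ge H(\Y \mid \mathbf 1_{E^c}) = p_a^{\kmark} H(\Y \mid E^c) + (1 - p_a^{\kmark})\, h(\filler\nu) \ge (1 - p_a^{\kmark})\, h(\filler\nu),
\]
so $h(\filler\nu) \le \kmark H(q)/(1 - p_a^{\kmark})$. Subtracting,
\[
h(\filler\mu) - h(\filler\nu) \ge \frac{\kmark\bigl(H(p) - H(q)\bigr) - H_2(p_a^{\kmark})}{1 - p_a^{\kmark}}.
\]
Because $a$ and $b$ are distinct symbols with $p_a, p_b > 0$, we have $p_a \le 1 - p_b < 1$, so $p_a^{\kmark} \to 0$ and hence $H_2(p_a^{\kmark}) \to 0$ as $\kmark \to \infty$. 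Since $H(p) - H(q) > 0$ by hypothesis, the numerator is positive for all sufficiently large $\kmark$, giving $h(\filler\mu) > h(\filler\nu)$ as required. There is no real obstacle here; the argument is a routine application of the chain rule and the principle that conditioning on an increasingly rare event perturbs entropies by a vanishing amount, and monotonicity of $\zeta$ is not used in this step.
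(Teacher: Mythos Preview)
Your proof is correct. Both you and the paper reduce the question to comparing the Shannon entropies of the two marginals of $\Gamma_\zeta$, and both handle the first marginal via the chain rule for $(\X,\mathbf 1_{E^c})$, obtaining the exact value $h(\filler\mu)=[\kmark H(p)-\Phi(p_a^{\kmark})]/(1-p_a^{\kmark})$ (the paper records only the weaker inequality $h(\filler\mu)\ge \kmark H(p)-\Phi(p_a^{\kmark})$, but derives the identity en route in Lemma~\ref{cap-H}).

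The genuine difference is in the upper bound for $h(\filler\nu)$. The paper builds an auxiliary variable $W$ that agrees with $\Y$ on $E$ and is an independent copy of $\tilde\Y$ on $E^c$, then bounds $H(W)$ via a joint-entropy decomposition; this produces the estimate $h(\filler\nu)\le \kmark H(q)+\Phi(p_a^{\kmark})+\kmark p_a^{\kmark}\log N$, with an extra $\kmark p_a^{\kmark}\log N$ term. Your route is more direct: the one-line inequality $H(\Y)\ge H(\Y\mid\mathbf 1_{E^c})\ge (1-p_a^{\kmark})\,h(\filler\nu)$ already gives $h(\filler\nu)\le \kmark H(q)/(1-p_a^{\kmark})$, with no auxiliary construction and no alphabet-size term. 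Both bounds suffice, since in either case the discrepancy with $\kmark H(q)$ vanishes as $\kmark\to\infty$ while the gap $\kmark(H(p)-H(q))$ grows linearly; your argument is simply a shorter path to the same conclusion. Your closing remark that monotonicity of $\zeta$ plays no role here is also correct.
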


The proof of Lemma~\ref{mono-entropy} follows from the following lemma,
the proof of which will involve some entropy calculations. If $X$ is a discrete
random variable taking values in a countable set $E=(e_i)_{i=1}^{\infty}$,
with\vspace*{1pt} probability distribution~$r$, we set $H(X)=H(r)$. Similarly, if
$\mathcal R=(R_i)_{i=1}^{\infty}$ is a partition of a probability space,
where $r_i = \mathbb{P}(R_i)$, then we also set $H(\mathcal{R})=H(r)$.
We also
let ${X}_{\mathrm{part}}= (\{X=e_i\})_{i=1}^{\infty}$, so that
$H({X}_{\mathrm{part}}) = H(X)$.
For $t\in[0,1]$, let $\Phi(t)=-t\log t-(1-t)\log(1-t)$, the entropy
of a two-element partition with elements of size $t$ and $1-t$.

%
\begin{lemma}
\label{cap-H}
Let $Z=(X,Y)$ be a jointly distributed pair of random variables,
each taking values in a finite
set $E$. Let $e^*\in E$.
Let $\tilde X$ be the variable $X$ conditioned on $\{X\ne e^*\}$ and
$\tilde Y$ be
the variable $Y$ conditioned on $\{X\ne e^*\}$. Let $u:=\mathbb P(X=e^*)$.
Then:
\begin{itemize}
\item
$H(\tilde X)\ge H(X)-\Phi(u)$.
\item
$H(\tilde Y)\le H(Y)+\Phi(u)+u\log(\#E)$.
\end{itemize}
\end{lemma}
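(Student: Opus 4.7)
The plan is to introduce the indicator $\mathbf{1}_A$ of the event $A := \{X = e^*\}$, which is a Bernoulli$(u)$ variable with $H(\mathbf{1}_A) = \Phi(u)$, and then to expand the joint entropies $H(X, \mathbf{1}_A)$ and $H(Y, \mathbf{1}_A)$ using the chain rule in two different ways.

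For the first inequality, note that $\mathbf{1}_A$ is a function of $X$, so $H(X, \mathbf{1}_A) = H(X)$. The chain rule also gives
\[
H(X, \mathbf{1}_A) = H(\mathbf{1}_A) + H(X \mid \mathbf{1}_A) = \Phi(u) + u\, H(X \mid A) + (1-u)\, H(X \mid A^c).
\]
Since $X$ equals $e^*$ deterministically on $A$, the term $H(X \mid A)$ vanishes, and by definition $H(X \mid A^c) = H(\tilde X)$. Equating the two expressions yields the exact identity $H(X) = \Phi(u) + (1-u) H(\tilde X)$, from which $H(\tilde X) \ge H(X) - \Phi(u)$ follows at once, since $1-u \le 1$ and $H(\tilde X) \ge 0$ together imply $H(\tilde X) \ge (1-u) H(\tilde X) = H(X) - \Phi(u)$.

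For the second inequality, $\mathbf{1}_A$ is no longer a function of $Y$, so only an inequality is available. Applying the standard fact that conditioning reduces entropy,
\[
H(Y) \;\ge\; H(Y \mid \mathbf{1}_A) \;=\; u\, H(Y \mid A) + (1-u)\, H(\tilde Y).
\]
Writing $(1-u) H(\tilde Y) = H(\tilde Y) - u H(\tilde Y)$ and rearranging gives
\[
H(\tilde Y) \;\le\; H(Y) + u\bigl(H(\tilde Y) - H(Y \mid A)\bigr) \;\le\; H(Y) + u \log \#E,
\]
where the last step uses $H(\tilde Y) \le \log \#E$ (since $\tilde Y$ is supported on $E$) together with $H(Y \mid A) \ge 0$. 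Because $\Phi(u) \ge 0$, this already implies the claimed bound $H(\tilde Y) \le H(Y) + \Phi(u) + u \log \#E$ (and is slightly stronger).

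No real obstacle arises: the whole proof is bookkeeping with the chain rule and the two elementary facts that a Bernoulli$(u)$ random variable has entropy $\Phi(u)$ and that any variable with alphabet $E$ has entropy at most $\log \#E$. The only minor care needed is in the first inequality to handle the case $H(X) < \Phi(u)$, which is automatic from $H(\tilde X) \ge 0$.
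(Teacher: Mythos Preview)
Your proof is correct. For the first inequality, both you and the paper derive the identity $H(X)=\Phi(u)+(1-u)H(\tilde X)$ and then pass to the inequality; the paper does it by expanding $-\sum \tfrac{r_i}{1-r_M}\log\tfrac{r_i}{1-r_M}$ directly, while you get there via the chain rule applied to $H(X,\mathbf 1_A)$. These are essentially the same argument in different notation.

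For the second inequality your route is genuinely different and cleaner. The paper builds an auxiliary variable $W$ (equal to $Y$ when $X\ne e^*$ and to an independent copy of $\tilde Y$ otherwise), observes $W\eqd\tilde Y$, and then bounds $H(W)$ by $H(\partt W\vee\partt Y\vee\mathcal Q)$ using the chain rule; this is where the extra $\Phi(u)$ term enters. You instead use only that conditioning reduces entropy, $H(Y)\ge H(Y\mid\mathbf 1_A)=uH(Y\mid A)+(1-u)H(\tilde Y)$, and rearrange to get $H(\tilde Y)\le H(Y)+u\log\#E$, which is strictly stronger than the stated bound. Your argument avoids the auxiliary construction entirely and shows that the $\Phi(u)$ term in the lemma is unnecessary; the paper's approach, while also elementary, carries a bit of slack from the coarser inequality $H(W)\le H(\partt W\vee\partt Y\vee\mathcal Q)$.
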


\begin{pf}
We may assume by relabeling that $E=\{1,2,\ldots,M\}$
and that \mbox{$e^*=M$}.
Let $r_i=\mathbb P(X=i)$. Then for the first inequality, we have
\begin{eqnarray*}
H(\tilde X)&= & -\sum_{i=1}^{M-1}
\frac{r_i}{1-r_M}\log\biggl(\frac{r_i}{1-r_M} \biggr)
\\[-2pt]
&= & -\frac{1}{1-r_M}\sum_{i=1}^{M-1}
r_i\log r_i + \log(1-r_M)
\\[-2pt]
&=& \frac{1}{1-r_M} \bigl(H(X) + r_M\log r_M +
(1-r_M)\log(1-r_M) \bigr)
\\[-2pt]
&\ge &  H(X)-\Phi(u).
\end{eqnarray*}

For the second inequality, let $\tilde Y'$ be an independent copy of
$\tilde Y$, and
define
\[
W=\cases{ Y,&\quad\mbox{if $X\ne e^*$,}
\cr
\tilde Y',& \quad\mbox{otherwise}.}
\]
Clearly $W$ has the same distribution as $\tilde Y$.
Let $\mathcal Q$ be the partition of the probability space into the two sets
$\{X=e^*\}$ and $\{X\ne e^*\}$.

We then have
\begin{eqnarray*}
H(\tilde Y)&=& H(W) \le H({W}_{\mathrm{part}}\vee{Y}_{\mathrm{part}}\vee
\mathcal
Q)
\\
&= & H({W}_{\mathrm{part}}  |  {Y}_{\mathrm{part}}\vee\mathcal
Q)+H({Y}_{\mathrm{part}}
\vee\mathcal Q)
\\
&\le &  H({W}_{\mathrm{part}} | {Y}_{\mathrm{part}} \vee\mathcal Q)+H(Y)+H(
\mathcal Q).
\end{eqnarray*}
By definition, $H(\mathcal Q)=\Phi(u)$.

If $X\ne e^*$ (an event with probability $1-u$), then knowing
in which element of ${Y}_{\mathrm{part}}\vee\mathcal Q$ a point lies,
determines $W$ and hence in which element of ${W}_{\mathrm{part}}$ it lies.
Otherwise, on a set of measure $u$, we simply know that
$W$ takes values in $E$. Hence $H({W}_{\mathrm{part}}  |  {Y}_{\mathrm
{part}}\vee\mathcal Q)$, which is the
expected amount of additional information gained by knowing
${W}_{\mathrm{part}}$ when
${Y}_{\mathrm{part}}\vee\mathcal Q$ is already known is at most $u\log
(\#E)$.
\end{pf}

\begin{pf*}{Proof of Lemma~\ref{mono-entropy}}
Let $\zeta$ be a joining of two Bernoulli measures $\mu$ and~$\nu$ on
$[N]^{\mathbb{Z}}$, and let $(X,Y)$ have law $\zeta
|_{[1,k_{\mathrm{mark}}]}=\gamma$. Note that $H(X)= k_{\mathrm{mark}} H(p)$
and $H(Y) = k_{\mathrm{mark}} H(q)$. Let $(\tilde X,\tilde Y)$ have law
$\Gamma$;
that is, $\gamma$ conditioned on the event that $X$ is not a string
of $k_{\mathrm{mark}}$ consecutive $a$'s; note that $\mathbb{P}(X =
a^{k_{\mathrm{mark}}})=p_a^{k_{\mathrm{mark}}}$.
Thus with Lemma~\ref{cap-H} we have
%
\begin{eqnarray}
 h(\mu_{\mathrm{fill}})-h(\nu_{\mathrm{fill}})&=& H(\tilde X)-H(
\tilde Y)
\nonumber
\\
\label{exp}&\ge & \bigl(H(X)-H(Y) \bigr)- \bigl(2\Phi\bigl(p_a^{k_{\mathrm{mark}}}
\bigr) - p_a^{k_{\mathrm{mark}}}\log N^{k_{\mathrm{mark}}} \bigr)
\\
&=& k_{\mathrm{mark}} \bigl(H(p) - H(q) \bigr) - \bigl(2\Phi\bigl
(p_a^{k_{\mathrm{mark}}}
\bigr) -k_{\mathrm{mark}} p_a^{k_{\mathrm{mark}}}\log N \bigr).\nonumber
\end{eqnarray}
Since we assume that $H(p) > H(q)$, the first term on the right-hand
side of
\eqref{exp} grows linearly as a function of $k_{\mathrm{mark}}$,
whereas the second term
decreases to zero exponentially as a function $k_{\mathrm{mark}}$.
\end{pf*}
Note that in our proof of Lemma~\ref{mono-entropy}, we made use of
the strictness of the inequality $H(p) >H(q)$.

%


\subsection{The star-coupling}
\label{delstar}

Let $(X_1,Y_1)$ and $(X_2, Y_2)$ be finite valued random variables
taking values in $(E_1, F_1)$ and $(E_2, F_2)$, where $E_1$ and $F_2$
are totally ordered via $<_1$ and $<_2$.
Following del Junco \cite{Juncoa,Junco}, we define the \textit{star-coupling}
of $(X_1,Y_1)$ and $(X_2, Y_2)$ in the following way. Set
$s_{f_1}(e_1):= \mathbb{P}(X_1 \leq_1 e_1  |  Y_1=f_1)$ and
$t_{e_2}(f_2):= \mathbb{P}(Y_2 \leq_2 f_2  |  X_2=e_2)$.
Let $V_2$, $V_1$
and $U$ be independent random variables uniformly distributed in
$[0,1]$. Set
\[
X_2':= F_{X_2}^{-1}(V_2)
\quad \mbox{and}\quad  Y_1':=F_{Y_1}^{-1}(V_1),
\]
so that $X_2'$ and $Y_1'$ are independently sampled copies of $X_2$ and $Y_1$.
For all $e_2 \in E_2$ and $f_1 \in F_1$, if $X_2' = e_2$ and
$Y_1'=f_1$, then
we define $Y_2'$ and $X_1'$ via the following conditional quantile coupling:
\[
Y_2':= t_{e_2}^{-1}(U) \quad\mbox{and}\quad
X_1':= s_{f_1}^{-1}(U).
\]
%
Clearly $(X_1', Y_1') \stackrel{d}{=} (X_1, Y_1)$
and $(X_2', Y_2') \stackrel{d}{=} (X_2, Y_2)$.

%
\begin{remark}
\label{indep}
In the star-coupling of $(X_1, Y_1)$ and $(X_2, Y_2)$, $X_2'$ is
independent of $(X_1', Y_1')$ and $Y_1'$ is independent of $(X_2', Y_2')$.
\end{remark}
%

\begin{remark}
\label{usefulb}
It follows from Remark~\ref{useful} that the star-coupling of the
random variables $(X_1, Y_1)$ and $(X_2, Y_2)$ taking values on
$(E_1, F_1)$ and $(E_2, F_2)$, respectively, has the property that
for a fixed $e_2 \in E_2$ and $f_1 \in F_1$, the number of $e_1 \in E_1$
such that there are distinct $f_2, h_2 \in F_2$ with both $(e_1,f_1, e_2,f_2)$
and $(e_1,f_1, e_2,h_2)$ receiving positive mass under the star-coupling
$(X_1', Y_1',\break  X_2', Y_2')$ is at most $\#F_2 -1$.
\end{remark}

Remark~\ref{usefulb}, Proposition~\ref{sub}, and the
Shannon--McMillan--Breiman theorem
\cite{MR0092710} lead to the following useful modification of a proposition
of del Junco \cite{Junco}, Proposition~4.8.

Let $Z_i:= (X_i, Y_i)$ be finite valued random variables where each
of the $X_i$'s and $Y_i$'s take values on ordered spaces $E_i$ and $F_i$.
We define the \textit{iterative} star-coupling of $Z_1, \ldots, Z_n$ to
be a
random variable $W_n$ taking values on $(E_1 \times\cdots\times E_n)
\times(F_1 \times\cdots\times F_n)$ in the following way for the
case $n=3$; the definition for general $n$ will follow inductively.
Let $Z_1':= (X_1', Y_1')$ and $Z_2':=(X_2', Y_2')$ be the star-coupling of
$Z_1$ and $Z_2$. Set $W_2:= ((X_1', X_2'), (Y_1', Y_2') )$.
Note that $(X_1', X_2')$ takes values in the space
$E_1 \times E_2$, which we endow with the lexicographic ordering.
Now let the star-coupling of $W_2$ and $Z_3$ be
given by $W_2':= ((X_1^{\prime\prime}, X_2^{\prime\prime}),
(Y_1^{\prime\prime}, Y_2^{\prime\prime}) )$ and $Z_3':= (X_3', Y_3')$.
Set $W_3:= ((X_1^{\prime\prime}, X_2^{\prime\prime}, X_3'),
(Y_1^{\prime\prime}, Y_2^{\prime\prime}, Y_3') )$.

%
\begin{remark}
Note that in general, even if the star-coupling of $(X_1, X_2)$ and
$(Y_1, Y_2)$ is defined, the star-coupling of $(X_2,Y_2)$ and $(X_1, Y_1)$
may not be defined, since the required spaces may not be ordered, and even
if they are, there is a lack of commutativity. Note the iterative
star-coupling is defined in a certain order, so that the iterative
star-coupling of $Z_1, Z_2, Z_3$ is given by the star-coupling of the
star-coupling of $(Z_1,Z_2)$ and $Z_3$. It is possible to define the
star-coupling so that it is associative \cite{Junco}, Lemma~4.3; this
observation is important for del Junco's construction of isomorphisms,
but will not be important for us.
\end{remark}

%
\begin{proposition}[(del Junco)]
\label{delJunco}
Let $\mu=p^{\mathbb{Z}}$ and $\nu=q^{\mathbb{Z}}$ be Bernoulli measures
on $[N]^{\mathbb{Z}}$,
where $H(p) > H(q)$ and $p \succeq q$.
Let $\zeta$ be an ergodic monotone joining
of $\mu$ and $\nu$. Given a sufficiently large integer
$k_{\mathrm{mark}} \in\mathbb{Z}^{+}$ so that the conclusion of
Lemma~\ref{mono-entropy} holds,
and $\eta>0$,
there exists a $n_{\mathrm{initial}} \in\mathbb{Z}^{+}$, and random
variable~$\bar{Z}_0$
taking values on $[N]^ {n_{\mathrm{initial}} k_{\mathrm{mark}}} \times
[N]^ {n_{\mathrm{initial}} k_{\mathrm{mark}}}$ with law ${\beta
}_{\mathrm{sub}}$ that is subordinate
to the measure $\Gamma^{n_{\mathrm{initial}}}$ and has the same marginals,
such that for all $n>0$, the following holds.

Define $(Z_i)_{i=1}^n$ to be independent random variables with law
$\Gamma$.
Define the following product space:
\[
\mathbf{I}_j := [N]^ {n_{\mathrm{initial}} k_{\mathrm{mark}}} \times
\prod
_{i=1}^j [N]^{k_{\mathrm{mark}}} \equiv
\bigl[N^k_{\mathrm{mark}}\bigr]^{n_{\mathrm{initial}}+j}.
\]
%
Let $\mathbf{W}_n= (\mathbf{X}_n, \mathbf{Y}_n)$ be a
random variable given by the iterative
star-coupling of $\bar{Z}_0, Z_1, \ldots, Z_n$.
There exists a deterministic function
$\Psi\dvtx \mathbf{I}_n \to\mathbf{I}_n$ such that $\mathbb{P} (\mathbf
{Y}_n =
\Psi(\mathbf{X}_n) ) > 1 - \eta$.
\end{proposition}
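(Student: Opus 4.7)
The plan is to combine the entropy gap $h(\filler\mu) > h(\filler\nu)$ supplied by Lemma~\ref{mono-entropy} with the one-step splitting bound of Remark~\ref{usefulb} and Shannon--McMillan--Breiman typicality to show that the iterative star-coupling concentrates, with high probability, on a set where $\mathbf Y_n$ is a deterministic function of $\mathbf X_n$. The role of the initial variable $\bar Z_0$ and the parameter $\kinitial$ is to use Proposition~\ref{sub} (Keane--Smorodinsky marriage) to preprocess the initial block $\Gamma^{\kinitial}$ so that the splitting contribution from the base of the iteration is controlled in advance, uniformly for all $n \ge 1$.

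First I would fix $\delta>0$ small with $3\delta < h(\filler\mu)-h(\filler\nu)$. The Shannon--McMillan--Breiman theorem applied to $\filler\mu$ and $\filler\nu$ on $\nset{N^{\kmark}}$ provides, for $\kinitial$ sufficiently large and every $n\ge 1$, typical sets $T^{X}_{\kinitial+n}, T^{Y}_{\kinitial+n}\subseteq \nset{N^{\kmark}}^{\kinitial+n}$ of measure at least $1-\eta/8$ with cardinalities bounded by $e^{(\kinitial+n)(h(\filler\mu)+\delta)}$ and $e^{(\kinitial+n)(h(\filler\nu)+\delta)}$ respectively, and with each element of $T^{X}_{\kinitial+n}$ of $\filler\mu$-mass at most $e^{-(\kinitial+n)(h(\filler\mu)-\delta)}$. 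Then apply Proposition~\ref{sub} to $\Gamma^{\kinitial}$ with $B' := T^{Y}_{\kinitial}$ to obtain $\subb\beta$: subordinate to $\Gamma^{\kinitial}$, with matching marginals, splitting at most $\#B'-1$ elements in $B'$. Let $\bar Z_0$ have law $\subb\beta$.

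Next I would propagate the one-step splitting bound through the iterative star-coupling of $\bar Z_0, Z_1, \ldots, Z_n$. At each step $j\ge 1$, Remark~\ref{usefulb} applied to the star-coupling of $(\mathbf X_{j-1},\mathbf Y_{j-1})$ with $Z_j$ shows that, for each fixed new $x_j$ and each fixed old $\mathbf y_{j-1}$, at most $N^{\kmark}-1$ histories $\mathbf x_{j-1}$ become split in the coordinate $y_j$. Combined with the initial $\#B'-1$ bound from the marriage step, a counting argument confined to typical $\mathbf y_n\in T^{Y}_{\kinitial+n}$ bounds the number of $\mathbf x_n\in T^{X}_{\kinitial+n}$ that are split within $T^{Y}_{\kinitial+n}$ by $\#T^{Y}_{\kinitial+n}\cdot(n N^{\kmark}+\#T^{Y}_{\kinitial})$, which is at most $e^{(\kinitial+n)(h(\filler\nu)+2\delta)}$ once $\kinitial+n$ is large enough to absorb the polynomial factor.

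Multiplying by the typical $\filler\mu$-mass per $\mathbf x_n$ gives $e^{-(\kinitial+n)(h(\filler\mu)-h(\filler\nu)-3\delta)}$ as an upper bound for the $\filler\mu$-mass of split typical $\mathbf x_n$, which tends to zero in $\kinitial+n$ by the entropy gap; combined with the atypical contributions from the complements of $T^{X},T^{Y}$ and the coupling mass sent to atypical $\mathbf y_n$, the total uncontrolled mass is less than $\eta$. Defining $\Psi(\mathbf x_n)$ to equal the unique $\mathbf y_n\in T^{Y}_{\kinitial+n}$ paired with $\mathbf x_n$ when this is well-defined (and arbitrary otherwise) then yields $\P(\mathbf Y_n=\Psi(\mathbf X_n))>1-\eta$; taking $\kinitial$ large enough also forces the initial split bound alone to handle the small-$n$ cases uniformly. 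The main obstacle is the additive bookkeeping in the counting step: the one-step bound of Remark~\ref{usefulb} must be propagated additively rather than multiplicatively, and the independence properties of Remark~\ref{indep} (new $X_j'$ independent of old history, old $\mathbf Y_{j-1}$ independent of the newly coupled pair) are essential to make this accounting valid.
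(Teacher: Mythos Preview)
Your overall architecture matches the paper's: use Proposition~\ref{sub} on the initial block, then iterate the star-coupling using Remark~\ref{usefulb}, and invoke SMB typicality plus the entropy gap. But the central counting step is wrong, and the argument cannot be completed as written.

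You claim that the number of $\mathbf x_n\in T^X_{\kinitial+n}$ that are split within $T^Y_{\kinitial+n}$ is at most $\#T^Y_{\kinitial+n}\cdot(nN^{\kmark}+\#T^Y_{\kinitial})\le e^{(\kinitial+n)(h(\filler\nu)+2\delta)}$. This bound is false. The point is that splitting propagates: once $\mathbf x_j$ is split, \emph{every} extension $(\mathbf x_j,x_{j+1},\ldots,x_n)$ is split. So a single split $\mathbf x_0$ already contributes on the order of $e^{n\,h(\filler\mu)}$ typical split $\mathbf x_n$, and in general the number of typical split $\mathbf x_n$ is of order $e^{(\kinitial+n)h(\filler\mu)-\delta\kinitial}$, not $e^{(\kinitial+n)h(\filler\nu)}$. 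Remark~\ref{usefulb} gives you $\le N^{\kmark}-1$ newly split $\mathbf x_{j-1}$ \emph{per pair} $(x_j,\mathbf y_{j-1})$, but the number of relevant $\mathbf y_{j-1}$ is already exponential, and the subsequent extensions blow this up further; there is no additive accumulation in cardinality.

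What the paper does instead is run the induction in \emph{probability}, not cardinality. The key step you are missing is the conditional bound
\[
\P\big(\mathbf X_{j-1}=\mathbf x \,\big|\, X_j=x,\ \mathbf Y_{j-1}=\mathbf y\big)\ \le\ \frac{\P(\mathbf X_{j-1}=\mathbf x)}{\P(\mathbf Y_{j-1}=\mathbf y)}\ \le\ e^{-\delta L_{j-1}}
\]
for good $\mathbf x,\mathbf y$, which uses Remark~\ref{indep} (independence of $X_j$ from $(\mathbf X_{j-1},\mathbf Y_{j-1})$) together with the SMB bounds on individual atoms. This converts the $N^{\kmark}$-element splitting bound from Remark~\ref{usefulb} into a probability increment $\le N^{\kmark}e^{-\delta L_{j-1}}$ at step $j$; summing these over $j$ yields a convergent geometric series, uniformly in $n$. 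You mention Remark~\ref{indep} only in the last sentence as something ``essential to make this accounting valid,'' but it is not an afterthought: it is precisely the mechanism that replaces your (incorrect) cardinality bound with the correct additive probability bound.
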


The key feature of this proposition is that $n$ can be taken arbitrarily
large, independently of $k_{\mathrm{mark}}$ and $\eta$. When appealing
to Proposition~\ref{delJunco}, we will refer to the set $[N]^{n_{\mathrm
{initial}} k_{\mathrm{mark}}}
\times[N]^{n_{\mathrm{initial}} k_{\mathrm{mark}}}$ as
the \textit{initial block}.

\begin{pf*}{Proof of Proposition~\ref{delJunco}}
The proof is an adaptation of
\cite{Junco}, Proposition~4.7. We will place conditions on $n_{\mathrm{initial}}$ later. Set
\[
\mathbf{L}_j:= n_{\mathrm{initial}} + j\qquad \mbox{for } 0 \leq j \leq n.
\]
%
%
Recall that we assumed that $k_{\mathrm{mark}}$ was chosen to ensure that
$h_{\mathrm{gap}}:=h({\mu}_{\mathrm{fill}}) - h({\nu}_{\mathrm
{fill}})>0$. Let $\varepsilon\in
(0, h_{\mathrm{gap}}/2)$ and
\begin{equation}
\label{delta}
\delta:=h_{\mathrm{gap}}-2\varepsilon>0.
\end{equation}
Let $\mathbf{x} \in{\mathbf{I}_j}$ be given by
$\mathbf{x}= (x_{0}, \ldots, x_{j})$. We say that $\mathbf{x}$ is
${\mu}_{\mathrm{fill}}$-\textit{good} if
\begin{equation}
\label{pgood}
\mu_{\mathrm{fill}}|_{[1, \mathbf{L}_j]}(\mathbf{x}) <
e^{-(h(\mu_{\mathrm{fill}}) - \varepsilon) \mathbf{L}_j},
\end{equation}
and is
${\mu}_{\mathrm{fill}}$-\textit{completely good} if for all $ 0 \leq i
\leq j$, we have
$(x_{0}, \ldots, x_{i}) \in\mathbf{I}_i$ is good. Similarly,
for $\mathbf{y}= (y_{0}, \ldots, y_{j})
\in {\mathbf{I}_j}$, we say that $\mathbf{y}$ is ${\nu}_{\mathrm
{fill}}$-\textit{good} if
\begin{equation}
\label{qgood} \nu_{\mathrm{fill}}|_{[1,\mathbf{{L}}_j]}(\mathbf{y}) >
e^{-(h(\nu_{\mathrm{fill}}) +\varepsilon) \mathbf{L}_j},
\end{equation}
and is
${\nu}_{\mathrm{fill}}$-\textit{completely good} if for all $ 0 \leq i
\leq j $, we have $(y_{0},
\ldots, y_{i}) \in{\mathbf{I}_i} $ is good.

Let $\mathbf{I}_0({\nu}_{\mathrm{fill}})^{\mathrm{good}}$ denote the
set of
${\nu}_{\mathrm{fill}}$-good elements of $\mathbf{I}_0$. Note that
\[
\# \mathbf{I}_0({\nu}_{\mathrm{fill}})^{\mathrm{good}} \leq
e^{(h(\nu_{\mathrm{fill}}) +\varepsilon) \mathbf{L}_0}.
\]
By Proposition~\ref{sub}, there exists a random variable $\bar{Z}_0$
taking values on\break\break $[N]^ {n_{\mathrm{initial}} k_{\mathrm{mark}}} \!\times
[N]^ {n_{\mathrm{initial}} k_{\mathrm{mark}}}$
with
law ${\beta}_{\mathrm{sub}}$, that is:
\begin{itemize}
\item
subordinate
to $\Gamma^{n_{\mathrm{initial}}}$,
\item

has the same marginals as $\Gamma^{n_{\mathrm{initial}}}$ and
\item
where at most $e^{(h(\nu_{\mathrm{fill}}) +\varepsilon) \mathbf
{L_0}}-1$ elements of
$\mathbf{I}_0$ are split in $\mathbf{I}_0({\nu}_{\mathrm
{fill}})^{\mathrm{good}}$.
\end{itemize}
Let
\[
\mathbf{J}_j:=\mathbf{I}_0({\nu}_{\mathrm{fill}})^{\mathrm{good}}
\times\prod_{i=1}^j [N]^{k_{\mathrm{mark}}}.
\]

For $j \geq0$, let $\mathbf{W}_j=(\mathbf{X}_j, \mathbf{Y}_j)$
be a random variable
given by the iterative star-coupling of $\bar{Z}_{0}, Z_1,
\ldots, Z_{j}$, where we set $\mathbf{W}_0:=\bar{Z}_0$; thus
$\mathbf{X}_j$ and $\mathbf{Y}_j$
take values in $ {\mathbf{I}_j}$.
We say that $\mathbf{x} \in {\mathbf{I}_j}$ is
\textit{desirable} if the following properties are satisfied:
\begin{longlist}[(a)]
\item[(a)]
The element $\mathbf{x}$ is ${\mu}_{\mathrm{fill}}$-completely good.
\item[(b)]
The element $\mathbf{x}$ is not split in $\mathbf{J}_j$ by
$\mathbf{W}_j=(\mathbf{X}_j, \mathbf{Y}_j)$.
\item[(c)]
Furthermore, there exists a unique ${\nu}_{\mathrm{fill}}$-completely good
$\mathbf{y} \in {\mathbf{J}_j}$ for which $(\mathbf{x}, \mathbf{y})$
receives positive mass under $\mathbf{W}_j$.
\end{longlist}
For desirable $\mathbf{x} \in\mathbf{I}_j$, set $\Psi_{j}(\mathbf{x}) =
\mathbf{y}$, where $\mathbf{y}$ is determined by condition~(c);
otherwise if $\mathbf{x}$ is not desirable simply set
$\Psi_{j}(\mathbf{x})= \mathbf{y}'$ for the fixed
$\mathbf{y}'\in\mathbf{I}_j$ that is just a block of $0$'s. Note that
\[
\mathbb{P} \bigl(\mathbf{Y}_j =
\Psi_j(\mathbf{X}_j) \bigr) \geq\mathbb{P}(\mathbf{X}_j
\mbox{ is desirable}).
\]

Using Remark~\ref{usefulb}, we will use the inductive argument in the proof
of \cite{Junco}, Lemma~4.6,  to show that for all $j \geq0$,
\begin{eqnarray}
&&\mathbb{P}(\mathbf{X}_j \mbox{ is not desirable})
\nonumber
\\[-10pt]
\label{pre-SMB}
\\[-10pt]
&& \qquad \leq\mathbb{P}(\mathbf{X}_j \mbox{ is not c.g.}) + \mathbb{P}(
\mathbf{Y}_j \mbox{ is not c.g.}) +
 e^{-\delta\mathbf{L}_0}+ {N}^{k_{\mathrm{mark}}
}\sum_{i=0}
^ {j-1} e^{-\delta \mathbf{L}_i},\nonumber
\end{eqnarray}
where ``c.g.'' is short for completely good.

The case $j=0$ is easy, since being good implies being completely good,
and under $\bar{Z}_0$ at most $e^{(h(\nu_{\mathrm{fill}}) +\varepsilon)
\mathbf{L}_0}-1$
elements of $\mathbf{I}_0$ are split in $\mathbf{J}_0$; thus by
\eqref{pgood}, the ${\mu}_{\mathrm{fill}}$-measure of all the ${\mu
}_{\mathrm{fill}}$-good
elements that are split by $\mathbf{J}_0$ is at most
$e^{-(h(\mu_{\mathrm{fill}}) - \varepsilon) \mathbf{L}_0} \times
e^{(h(\nu_{\mathrm{fill}}) +\varepsilon)\mathbf{L}_0} \leq e^{-\delta
\mathbf{L}_0}$,
by \eqref{delta}.

Assume \eqref{pre-SMB} for the case $j-1 \geq0$. We show that
\eqref{pre-SMB} holds for the case $j$. Let $E$ be the event that
$\mathbf{X}_{j-1} \mbox{ is desirable but }
\mathbf{X}_{j} \mbox{ is not desirable}$. Clearly,
%
\begin{equation}
\label{cases}
\mathbb{P}(\mathbf{X}_j \mbox{ is not desirable})
\leq\mathbb{P}(\mathbf{X}_{j-1} \mbox{ is not desirable}) +
\mathbb{P}(E).
\end{equation}
Note that on the event $E$,
the random variables $\mathbf{X}_{j-1}$ and $\mathbf{Y}_{j-1}$ are
completely good.
Observe that the event $E$ is contained in the following three events:
\begin{longlist}[(III)]
\item[(I)]
$E_1:=$ The random variable $\mathbf{X}_j$ is not good, but $\mathbf{X}_{j-1}$
is completely good.
\item[(II)]
$E_2:=$ The random variable $\mathbf{X}_j$ is completely good, but is split
in $\mathbf{J}_j$ under the iterative star-coupling $\mathbf{W}_j$,
even though $\mathbf{X}_{j-1}$ is desirable.
\item[(III)]
$E_3:=$ The random variable $\mathbf{Y}_j$ is not good, but
$\mathbf{Y}_{j-1}$ is completely good.
\end{longlist}
Clearly,
\begin{equation}
\label{casea} \mathbb{P}(E_1) + \mathbb{P}(\mathbf{X}_{j-1}
\mbox{ is not c.g.}) = \mathbb{P}(\mathbf{X}_j \mbox{ is not c.g.}).
\end{equation}
Similarly,
\begin{equation}
\label{casec} \mathbb{P}(E_3) + \mathbb{P}(\mathbf{Y}_{j-1}
\mbox{ is not c.g.}) = \mathbb{P}(\mathbf{Y}_j \mbox{ is not c.g.}).
\end{equation}

Let us focus on the event $E_2$. Let $\mathbf{X}_{j} = ( \mathbf{X}_{j-1},X)$,
so that $X$ takes values in $[N]^{k_{\mathrm{mark}}}$. We show that for any
$x \in[N]^{k_{\mathrm{mark}}}$ and any completely good $\mathbf{y} \in
{\mathbf{J}_{j-1}}$ that
\begin{equation}
\label{use-usefulb}
\mathbb{P}(E_2 |  X=x, \mathbf{Y}_{j-1} =
\mathbf{y}) \leq N^{k_{\mathrm{mark}}
} e^{-\delta\mathbf{L}_{j-1}},
\end{equation}
so that $\mathbb{P}(E_2) \leq N^{k_{\mathrm{mark}}}e^{-\delta\mathbf
{L}_{j-1}}$ and it follows that
\eqref{pre-SMB} holds by \eqref{cases}, \eqref{casea}, \eqref{casec} and
the inductive hypothesis.

Note that if $\mathbf{x}$ and $\mathbf{y}$ are good, then
\begin{eqnarray}
\mathbb{P}( \mathbf{X}_{j-1} = \mathbf{x} |X=x, \mathbf{Y}_{j-1}
= \mathbf{y}) &=& \frac{\mathbb{P}(\mathbf{X}_{j-1} = \mathbf{x},
\mathbf{Y}_{j-1} =
\mathbf{y}, X=x)}{\mathbb{P}(\mathbf{Y}_{j-1} = \mathbf{y}, X=x)}
\nonumber
\\
\label{first} &=& \frac{\mathbb{P}(\mathbf{X}_{j-1} = \mathbf{x},
\mathbf{Y}_{j-1} = \mathbf{y})}{
\mathbb{P}(\mathbf{Y}_{j-1} = \mathbf{y})}
\\
&\leq& \frac{\mathbb{P}(\mathbf{X}_{j-1} = \mathbf{x})}{\mathbb
{P}(\mathbf{Y}_{j-1} =
\mathbf{y})}
\nonumber
\\
\label{division} & \leq& e^{-\delta\mathbf{L}_{j-1}},
\end{eqnarray}
where \eqref{first} follows from Remark~\ref{indep}
(with $X=X_2'$ and $\mathbf{Y}_{j-1} = Y_1'$) and \eqref{division}
follows from \eqref{pgood}, \eqref{qgood} and \eqref{delta}.
Also note that if $\mathbf{x}$ is desirable,
then $(\mathbf{x}, x)$ is split under $\mathbf{W}_j$ if and only if for
the unique $\mathbf{y}$ for which
$(\mathbf{x}, \mathbf{y})$ receives positive mass under $\mathbf{W}_{j-1}$
there exist distinct $y,y' \in[N]^{k_{\mathrm{mark}}}$
for which for which both $((\mathbf{x}, x), (\mathbf{y}, y))$ and
$((\mathbf{x}, x), (\mathbf{y}, y'))$
receive positive mass under $\mathbf{W}_j$.
By Remark~\ref{usefulb},
for a fixed $x \in[N]^{k_{\mathrm{mark}}}$ and $\mathbf{y} \in{\mathbf
{J}_{j-1}}$,
the set of all $\mathbf{x}$ such that there exists distinct $y,y'\in
[N]^{k_{\mathrm{mark}}}$
for which both $((\mathbf{x}, x), (\mathbf{y}, y))$ and $((\mathbf{x},
x), (\mathbf{y}, y'))$
receive positive mass under $\mathbf{W}_j$ has at
most $ N^{k_{\mathrm{mark}}}-1$ elements; thus summing over all such
$\mathbf{x}$
yields \eqref{use-usefulb}.

The Shannon--McMillan--Breiman theorem implies that $n_{\mathrm{initial}}$
can be chosen so that all
four terms in \eqref{pre-SMB} can be made smaller than $\eta/4$.
This is done in the following way.

Set
\[
S_{\mu}(k, K)
:=\bigl\{\mathbf{x} \in\bigl[N^{k_{\mathrm{mark}}}\bigr]^K\dvtx
{\mu}_{\mathrm{fill}}|_{[1,\ell]}(\mathbf{x}) < e^{-(h({\mu}_{\mathrm
{fill}})-\varepsilon)\ell} \mbox{ for all
} k \leq\ell\leq K\bigr\}
\]
and
\[
S_{\nu}(k, K)
:=\bigl\{\mathbf{y} \in\bigl[N^{k_{\mathrm{mark}}}\bigr]^K\dvtx
{\nu}_{\mathrm{fill}}|_{[1,\ell]}(\mathbf{y}) > e^{-(h({\nu}_{\mathrm
{fill}}) +\varepsilon)\ell} \mbox{ for all
} k \leq\ell\leq K\bigr\}.
\]
By the Shannon--McMillan--Breiman theorem choose $n_{\mathrm{initial}}$
so that for all
$K > n_{\mathrm{initial}}$, we have
\begin{eqnarray}
\label{SMB-top}
{\mu}_{\mathrm{fill}}| _{[1,K]}\bigl(S_{\mu}(n_{\mathrm
{initial}},
K)\bigr) &>& 1 - \eta/4 \quad \mbox{and}
\\
\label{SMB-bom}
{\nu}_{\mathrm{fill}}|_{[1,K]}\bigl(S_{\nu}(n_{\mathrm
{initial}},
K)\bigr) &>& 1 - \eta/4;
\end{eqnarray}
we can also require that
\begin{equation}
\label{pile}
N^{k_{\mathrm{mark}}}\sum_{i=n_{\mathrm{initial}}}^{\infty}
e^{-\delta i} < \eta/4.
\end{equation}


Conditions \eqref{SMB-top} and \eqref{SMB-bom} give that $\mathbb
{P}(\mathbf{X}_j
\mbox{ is not c.g.}) < \eta/4 $ and $\mathbb{P}(\mathbf{Y}_j \mbox{ is not}\break \mbox{c.g.}) <
\eta/4$, and \eqref{pile} ensures that
$ e^{-\delta \mathbf{L}_0} \leq\eta/4$, and ${N}^{k_{\mathrm
{mark}}}\sum_{i=0} ^ {j-1}
e^{-\delta \mathbf{L}_j} < \eta/4$; thus all four
terms on the right-hand side of inequality \eqref{pre-SMB} are less
than $\eta/4$.
\end{pf*}

%
\begin{remark}
In proof of Proposition~\ref{delJunco}, recall that we appealed to
Remark~\ref{usefulb} which is the reason for the term $N^{k_{\mathrm
{mark}}}$ in
\eqref{pre-SMB}. Since our proof of Theorem~\ref{delJunco} relies on
Proposition~\ref{delJunco}, we do not know if the analogue of
Theorem~\ref{result} is true if $q$ gives positive mass to a countable
number of symbols. Note the Sinai and Ornstein theorems include the
case where the entropy is possibly infinite and there are a countable
number of symbols. See, for example, \cite{down}, Section~4.5,
for a recent treatment.
\end{remark}

%
%
%
%

\section{Proof of Theorem~\texorpdfstring{\protect\ref{result}}{1}}

\subsection{The alternating star-joining}

Let $\zeta$ be a monotone joining of the two Bernoulli measures $\mu$
and $\nu$,
and let $k_{\mathrm{mark}}>0$, and ${\zeta}_{\mathrm{alt}}$ be its
associated alternating
joining. Assume that we have already applied Proposition~\ref{delJunco}
to obtain an $n_{\mathrm{initial}}$ and a probability measure $\beta
_{\mathrm{sub}}$ on
$[N]^ {n_{\mathrm{initial}} k_{\mathrm{mark}}} \times
[N]^ {n_{\mathrm{initial}} k_{\mathrm{mark}}}$
that is
subordinate
to $\Gamma^{n_{\mathrm{initial}}}$ and has the same marginals.
By re-sampling on (most of the) free intervals of ${\zeta}_{\mathrm
{alt}}$ by using the
star-coupling, we will produce another monotone joining ${\zeta
}_{\mathrm{alt}*}$ of
$\mu$ and $\nu$. We define ${\zeta}_{\mathrm{alt}*}$ in the following way.

Let $r_{\mathrm{mark}}$ be a large integer
to be chosen later. A
\textit{super marker}
is the maximal union of at least $r_{\mathrm{mark}}$ consecutive markers,
and we call
the set of integers between
and not including two super markers a \textit{large block}.

Let $\mathrm{Z}=(\mathrm{X}, \mathrm{Y})$ have law ${\zeta}_{\mathrm{alt}}$.
Call any large block with at least $n_{\mathrm{initial}}$ free
intervals an
\textit{action block}; we re-sample only on the action blocks. We
define a
new random variable $\mathrm{Z}'=(\mathrm{X}', \mathrm{Y}')$ taking
values on
$[N]^{\mathbb{Z}} \times[N]^{\mathbb{Z}}$ by first declaring that on every
frozen interval or free interval $I$ not belonging to an action block
that $\mathrm{Z}' |_I = \mathrm{Z}|_{I}$.
Next, for a fixed action block,
let $Z_0$ (taking values in $[N]^{k_{\mathrm{mark}}n_{\mathrm
{initial}}} \times
[N]^{k_{\mathrm{mark}}n_{\mathrm{initial}}}$) be
$\mathrm{Z}$ restricted to the first $n_{\mathrm{initial}}$ free
intervals. Let $(I_i)_{i=1}^n$
be the remaining free intervals, and let $Z_{i}=(X_i, Y_i)$ (taking values
in $[N]^k_{\mathrm{mark}} \times[N]^k_{\mathrm{mark}}$) be $\mathrm
{Z}$ restricted to $I_i$.
By Proposition~\ref{ball}, conditional on the alternating intervals
$\mathbf{K}(\mathrm{X})$, we have that $(Z_i)_{i=1}^n$ is an i.i.d.
sequence of
random variables with law $\Gamma_{\zeta}$, and the law of
${Z}_{0}=(X_0, Y_0)$
is given by the law of
$\Gamma_{\zeta}^{n_{\mathrm{initial}}}$, and is also independent of
$(Z_i)_{i=1}^n$.
Let $\bar{Z}_0=
(\bar{X}_0, \bar{Y_0})$ have law given by the measure
$\beta_{\mathrm{sub}}$.
Take the iterative star-coupling of the random variables
$\bar{Z}_0, Z_{1}, \ldots, Z_{n}$
to obtain a random variable
\[
\mathbf{W}= \bigl( (\bar{X}_0', X_1', \ldots, X_n'),   (\bar{Y}_0',Y_1',
\ldots, Y_n') \bigr),
\]
taking values on $[N]^{k_{\mathrm{mark}}n_{\mathrm{initial}}}
[N]^{k_{\mathrm{mark}} n}
 \times  [N]^{k_{\mathrm{mark}}n_{\mathrm{initial}}} [N]^{{k_{\mathrm
{mark}}}n}$,
which we call the \textit{star-filler} for an action block.

By Remark~\ref{indep}, independence of the $(Z_i)_{i=0}^n$ and the fact
that ${\beta}_{\mathrm{sub}}$ is a coupling of $X_0$ and $Y_0$, we have
\begin{eqnarray}
\label{short-remark}
({X}_{0}, X_{1}, \ldots,
X_{n}) &\stackrel{d} {=}& \bigl(\bar{X}'_{0},
X'_{1}, \ldots, X'_{n}\bigr)
\quad\mbox{and}
\nonumber
\\[-8pt]
\\[-8pt]
\nonumber
({Y}_{0}, Y_{1}, \ldots, Y_{n}) &\stackrel{d}{=}& \bigl(\bar{Y}_{0}', Y'_{1},
\ldots, Y'_{n}\bigr).
\end{eqnarray}
For each $k >0$, let $\succeq_{k}$ be the partial order on $[N]^k$ defined
by $x \succeq_{k} x'$ if and only if $x_i \geq x_i'$ for all $1 \leq i
\leq k$.
Since $\zeta$ is a monotone joining, ${\zeta}_{\mathrm{alt}}$
is a monotone joining, and we have that $X_i \succeq_{k_{\mathrm
{mark}}} Y_i$ for all $ 1 \leq i \leq n$,
and we also have that $\bar{X_0} \succeq_{k_{\mathrm{mark}}n_{\mathrm
{initial}}} \bar{Y_0}$ since
${\beta}_{\mathrm{sub}}$ is subordinate to $\Gamma_{\zeta}^{n_{\mathrm
{initial}}}$.
By the definition of the iterative star-coupling we also have that
\begin{eqnarray}
(\bar{X_0}, \bar{Y_0}) &\stackrel{d}
{=}& \bigl(\bar{X_0'}, \bar{Y_0'}
\bigr) \quad  \mbox{and}
\nonumber
\\[-8pt]
\label{d-block-dist} \\[-8pt]
\nonumber
({X_i}, Y_i) &\stackrel{d} {=}& \bigl(X_i',Y_i'
\bigr) \qquad \mbox{for all } 1 \leq i \leq n;
\end{eqnarray}
in particular, this implies that
\begin{equation}
\label{d-block-mono} \bar{X_0'} \succeq_{k_{\mathrm{mark}}n_{\mathrm{initial}}}
\bar{Y_0'}\quad \mbox{and}\quad X_i'
\succeq_{k_{\mathrm{mark}}} Y_i'.
\end{equation}

On each action block, by using the star-filler, we re-sample all of
its free intervals, using independent randomization on each action block.
Call ${\zeta}_{\mathrm{alt}*}$ the law of the resulting random variable
$\mathrm{Z}'$, the
\textit{alternating star-joining}.

%
\begin{lemma}
\label{weak-star-del}
Let $\zeta$ be an ergodic monotone joining of two Bernoulli measures
$\mu$ and $\nu$. The alternating star-joining ${\zeta}_{\mathrm{alt}*}$
is also an
ergodic monotone joining of $\mu$ and $\nu$. In addition, for any integer
$n_{\mathrm{rel}} \geq n_{\mathrm{initial}}$, let $R$ be the set of all
elements of $[N]^{\mathbb{Z}} \times[N]^{\mathbb{Z}}$ for which the
origin is contained
in an action block that contains
less than $n_{\mathrm{rel}}$ number of alternating intervals of size
$k_{\mathrm{mark}}$. Then
for any $\varepsilon>0$ for all sufficiently large
$k_{\mathrm{mark}}$ we have
\begin{equation}
\label{R} d^{*}(\zeta, {\zeta}_{\mathrm{alt}*}) < \varepsilon+
2{\zeta}_{\mathrm{alt}*}(R) + n_{\mathrm{initial}}/n_{\mathrm{rel}},
\end{equation}
where the inequality holds independently of the choice of $n_{\mathrm
{initial}}$ and the
probability measure ${\beta}_{\mathrm{sub}}$ that is subordinate
to $\Gamma^{n_{\mathrm{initial}}}$ and has the same marginals.
\end{lemma}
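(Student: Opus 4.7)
The plan is to establish three assertions in order: that $\del{\zeta}$ is a coupling of $\mu$ and $\nu$, that it is a shift-invariant monotone ergodic joining, and finally the weak-star bound~\eqref{R}. Since the construction leaves every frozen interval and every free interval outside an action block unchanged, the first two items reduce to verifying properties of the iterative star-filler on a single action block. The marginal identities~\eqref{short-remark} will show that the $\X'$- and $\Y'$-projections of $\del{\zeta}$ on each action block agree with those of $\ball{\zeta}$, giving $\mu$ and $\nu$ as the full marginals of $\del{\zeta}$. Monotonicity inside action blocks is exactly~\eqref{d-block-mono}, while outside it is inherited from $\ball{\zeta}$. Shift-invariance will be immediate since the decomposition into markers, super markers, and alternating intervals is a translation-equivariant function of $\X$, and the star-filler is applied with independent fresh randomization on each action block. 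Ergodicity will come by presenting $\del{\zeta}$ as a measurable factor of the product of $\ball{\zeta}$ with an i.i.d.\ sequence of uniforms (the randomization used by the iterative star-coupling); as a factor of an ergodic product, it is ergodic.

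For the bound~\eqref{R}, I will apply the triangle inequality
\[
\metricstar(\zeta,\del{\zeta}) \le \metricstar(\zeta,\ball{\zeta}) + \metricstar(\ball{\zeta},\del{\zeta}),
\]
and bound the first term by a preassigned $\e$ via Lemma~\ref{weak-star-ball} by taking $\kmark$ large (the threshold depending on $\e$ alone, not on $\kinitial$ or $\subb{\beta}$). For the second term, fix $n\ge 0$ and a cylinder $C \in \mathcal{C}_n$, and take $\kmark > 2n+1$. Let $G$ be the event that the window $[-n,n]$ is contained in a single alternating interval $I$ of $\mathbf{K}(\X)$, and that $I$ is either frozen, lies in a non-action block, or is a non-initial free interval of an action block having at least $\nrel$ alternating intervals of size $\kmark$. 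On $G$ the pair $(\X'\ronn_I,\Y'\ronn_I)$ has the same joint law as $(\X\ronn_I,\Y\ronn_I)$: tautologically for intervals that are not resampled, and for non-initial free intervals in a large action block by the single-interval marginal identity~\eqref{d-block-dist}, which returns the original $\Gamma_{\zeta}$-distribution on that interval. Hence $\ball{\zeta}(C\cap G) = \del{\zeta}(C \cap G)$, so $|\ball{\zeta}(C) - \del{\zeta}(C)| \le \ball{\zeta}(G^c) + \del{\zeta}(G^c)$.

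The complement $G^c$ decomposes into three pieces: (i)~the window straddles two alternating intervals, which has probability at most $(2n+1)/\kmark$ and is absorbed into the $\e$ for $\kmark$ large; (ii)~the origin is in an action block with fewer than $\nrel$ alternating intervals of size $\kmark$, namely the event $R$; and (iii)~the origin lies in the first $\kinitial$ free intervals of an action block with at least $\nrel$ such alternating intervals, which by a Kac-style stationarity argument has probability at most $\kinitial/\nrel$. Because $\mathbf{K}(\X)$ and the marker structure are functions of $\X$ alone, whose law is $\mu$ under both joinings, the probabilities of (ii) and (iii) match under $\ball{\zeta}$ and $\del{\zeta}$; summing the two bounds yields the $2\del{\zeta}(R)$ and $\kinitial/\nrel$ terms in~\eqref{R}. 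The main subtlety, and the reason for isolating the initial block and the event~$R$, is that the iterative star-coupling preserves only the single-interval marginals~\eqref{d-block-dist}, not the joint distribution across several free intervals within an action block; confining the cylinder window to one free interval (via $\kmark > 2n+1$) and discarding small action blocks is what sidesteps this.
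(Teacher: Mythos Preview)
Your proof is correct and follows essentially the same route as the paper: use Proposition~\ref{alt-ball} together with \eqref{short-remark} and \eqref{d-block-mono} for the joining/monotonicity/ergodicity claims, then bound $\metricstar(\zeta,\del{\zeta})$ by passing through $\ball{\zeta}$, invoking Lemma~\ref{weak-star-ball} for the first leg and the single-interval identity~\eqref{d-block-dist} on non-initial free intervals for the second. The paper's version is much terser (it just says ``similar to Lemma~\ref{weak-star-ball}'' and records the two key facts $\del{\zeta}(R)=\ball{\zeta}(R)$ and the $\kinitial/\nrel$ bound on the initial-block probability), but your decomposition of $G^c$ into the straddling, small-action-block, and initial-block pieces is exactly what is implicit there; your added justification of ergodicity via a factor of an ergodic product is a helpful detail the paper omits.
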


\begin{pf}
It follows from Proposition~\ref{alt-ball}, the definition of a star-filler,
\eqref{short-remark}, and~\eqref{d-block-mono} that ${\zeta}_{\mathrm
{alt}*}$ is an
ergodic monotone joining of $\mu$ and $\nu$.

The proof of inequality \eqref{R} is similar to that of Lemma~\ref
{weak-star-ball},
except for the following modification.
Note that ${\zeta}_{\mathrm{alt}*}(R) = {\zeta}_{\mathrm{alt}}(R)$ and
by \eqref{d-block-dist} that restricted to
every alternating interval of size $k_{\mathrm{mark}}$ that is not part
of an initial block,
${\zeta}_{\mathrm{alt}*}$ and ${\zeta}_{\mathrm{alt}}$ are equal; the
probability that an alternating interval
of size $k_{\mathrm{mark}}$ is part of an initial block is bounded by
$n_{\mathrm{initial}}/ n_{\mathrm{rel}}$,
when there are more than $n_{\mathrm{rel}}$ alternating
intervals of size $k_{\mathrm{mark}}$.
\end{pf}

We will show using Proposition~\ref{delJunco} that with a proper choice of
parameters that ${\zeta}_{\mathrm{alt}*}$ will be a suitable almost
factor and weak-star
close to $\zeta$.

\subsection{Baire category and the choice of parameters}

%
\begin{lemma}[(The Baire space)]
\label{baire-lemma}
Let $\mu=p^{\mathbb{Z}}$ and $\nu=q^{\mathbb{Z}}$ be two Bernoulli
measures on
$[N]^{\mathbb{Z}}$, where $p \succeq q$.
The space $\mathcal{M}= \mathcal{M}(\mu, \nu)$ of all monotone ergodic
joinings of $\mu$ and $\nu$ is a Baire space.
\end{lemma}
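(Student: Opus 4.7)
The plan is to exhibit $\mathcal{M}$ as a $G_\delta$ subset of a compact metric space and then invoke the classical theorem of Alexandrov that a $G_\delta$ subset of a completely metrizable space is itself completely metrizable; combined with the Baire category theorem, this will give that $\mathcal{M}$ is a Baire space.

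First I would set up the ambient space. Let $\mathcal{P}$ denote the space of Borel probability measures on $\nset{N}^{\Z} \times \nset{N}^{\Z}$, which is compact and metrizable in the weak-star topology (the metric $\metricstar$ of Section \ref{ws} is one concrete choice). Let $\mathcal{I} \subset \mathcal{P}$ denote the subset of $(T \times S)$-invariant probability measures, and let $\mathcal{J} \subset \mathcal{I}$ denote the subset of monotone joinings of $\mu$ and $\nu$. Both $\mathcal{I}$ and $\mathcal{J}$ are closed (hence compact) in $\mathcal{P}$: invariance is the condition $\int f \, d\zeta = \int f \circ (T \times S) \, d\zeta$ for every continuous $f$, which passes to weak-star limits; having first marginal $\mu$ and second marginal $\nu$ is a weak-star continuous constraint; and being supported on the closed set $\ns{(\x, \y) : \x \succeq \y}$ is preserved under weak-star limits of measures. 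Thus $\mathcal{J}$ is a compact, metrizable, and in particular completely metrizable subspace of $\mathcal{P}$.

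Next I would show that the set $\mathcal{E}$ of ergodic elements of $\mathcal{I}$ is a $G_\delta$ in $\mathcal{I}$. By the ergodic decomposition, the ergodic elements of $\mathcal{I}$ are exactly the extreme points of the compact convex set $\mathcal{I}$. The set of non-extreme points is the image, under the weak-star continuous midpoint map $(\zeta_1, \zeta_2) \mapsto \tfrac{1}{2}(\zeta_1 + \zeta_2)$, of $\bigcup_{n \geq 1} \ns{(\zeta_1, \zeta_2) \in \mathcal{I} \times \mathcal{I} : \metricstar(\zeta_1, \zeta_2) \geq 1/n}$. Each set in this union is closed in the compact space $\mathcal{I} \times \mathcal{I}$, hence compact, so its continuous image is compact, hence closed in $\mathcal{I}$. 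Therefore the non-extreme points form an $F_\sigma$ in $\mathcal{I}$, and $\mathcal{E}$ is a $G_\delta$ in $\mathcal{I}$.

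Finally, since $\mathcal{J}$ is closed in $\mathcal{I}$, writing $\mathcal{E} = \bigcap_n U_n$ with $U_n$ open in $\mathcal{I}$ yields $\mathcal{M} = \mathcal{J} \cap \mathcal{E} = \bigcap_n (\mathcal{J} \cap U_n)$, where each $\mathcal{J} \cap U_n$ is open in the subspace topology on $\mathcal{J}$; hence $\mathcal{M}$ is a $G_\delta$ in the compact metric space $\mathcal{J}$. By Alexandrov's theorem, $\mathcal{M}$ is then completely metrizable, and the Baire category theorem implies that $\mathcal{M}$ is a Baire space. The only mildly delicate point is the $G_\delta$ characterization of the ergodic invariant measures via the extreme-point structure; all other steps are routine preservation statements under weak-star limits, so no serious obstacle is anticipated.
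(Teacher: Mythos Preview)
Your proof is correct and follows essentially the same route as the paper: show that the monotone joinings form a closed (hence compact) subset of the space of invariant measures, that the ergodic measures form a $G_\delta$, intersect to get $\mathcal{M}$ as a $G_\delta$ in a compact metric space, and conclude via Alexandrov's theorem and the Baire category theorem. The only difference is that you supply a self-contained extreme-point argument for the $G_\delta$ characterization of the ergodic measures, whereas the paper simply cites \cite{down,Rue}; the paper also explicitly notes non-emptiness of $\mathcal{M}$ (witnessed by $\varrho^{\Z}$), which you omit but which is not needed for the lemma as stated.
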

\begin{pf}
It is well known that space of all joinings of $\mu$ and $\nu$
is nonempty (since it contains the product measure), compact
and convex; furthermore its extreme points are the ergodic
joinings which form a (relatively) $G_{\delta}$ subset in
the space of all joinings of $\mu$ and $\nu$
\cite{down}, page 122, \cite{Rue}, Proposition~1.5.

Note that the subset of \emph{monotone}
joinings of $\mu$ and $\nu$ is closed and nonempty;
the ergodic monotone joining $\varrho^{\mathbb{Z}}$ is a witness to the
latter fact,
where $\varrho$ is the (monotone) quantile coupling of $p$ and $q$.
Hence $\mathcal M$ is a nonempty $G_{\delta}$ subset.

A $G_\delta$ subset of a complete metric space is a Polish space by a theorem
of Alexandrov \cite{Srivastava}, Theorem~2.2.1; and the
Baire category theorem tells us that
every Polish space is a Baire space \cite{Srivastava}, Theorem~2.5.5.
\end{pf}

Let $\mathcal F$ denote the product sigma-algebra for $[N]^{\mathbb{Z}}$.
Let
\[
\mathcal P := \{P_i\dvtx  0 \leq i \leq N-1\}
\]
denote the partition of $[N]^{\mathbb{Z}}$ according the zeroth coordinate
so that $P_i := \{x \in[N]^{\mathbb{Z}}\dvtx  x_0 = i\}$. Let $\zeta$ be a
joining of
the Bernoulli measures $\mu$ and $\nu$,
and let $\varepsilon> 0$. If for every set $P \in\sigma(\mathcal P)$
there exists a
$P' \in\mathcal F$ such that
\[
\zeta\bigl( \bigl(P' \times[N]^{\mathbb{Z}}\bigr)  \triangle
 \bigl([N]^{\mathbb{Z}} \times P\bigr) \bigr) < \varepsilon,
\]
then we say that $\zeta$ is an
\textit{$\varepsilon$-almost factor}.
%

%
\begin{lemma}
\label{factor}
Let $\zeta$ be a joining of two Bernoulli measures $\mu$ and $\nu$.
If $\zeta$ is an $\varepsilon$-almost factor for all $\varepsilon>0$,
then there exists a factor $\phi$ such that
\[
\zeta(F \times G) = \mu\bigl(F \cap\phi^{-1}(G)\bigr)
\qquad\mbox{for all $(F, G) \in\mathcal F \times\mathcal F$}.
\]
\end{lemma}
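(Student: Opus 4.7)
The plan is to reinterpret the $\e$-almost factor hypothesis as saying that the second-coordinate $\sigma$-algebra is contained in the $\zeta$-completion of the first-coordinate $\sigma$-algebra, and then read off $\phi$ coordinate by coordinate.

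First I would upgrade the $\e$-approximation to an exact mod-null equality. Fix $P \in \sigma(\mathcal{P})$; the hypothesis produces $P'_n \in \mathcal{F}$ with $\zeta((P'_n \times \nset{N}^{\Z}) \triangle (\nset{N}^{\Z} \times P)) < 2^{-n}$. The triangle inequality shows that $\mathbf{1}_{P'_n}$, viewed as a function on $\nset{N}^{\Z} \times \nset{N}^{\Z}$, is Cauchy in $L^1(\zeta)$ and that $\mathbf{1}_{P'_n}(\x) \to \mathbf{1}_P(\y)$ in $L^1(\zeta)$. The $L^1(\zeta)$-limit depends only on $\x$ (as every term does), is a.s.\ $\{0,1\}$-valued (pass to an a.e.-convergent subsequence), and hence equals $\mathbf{1}_{P'}$ for a set $P' \in \mathcal{F}$; thus $P' \times \nset{N}^{\Z} = \nset{N}^{\Z} \times P$ modulo a $\zeta$-null set. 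Using $T \times S$-invariance of $\zeta$, the same conclusion holds for every $S^{-i}P$, and Boolean combinations give exact mod-null approximations for all events depending on finitely many $\y$-coordinates. A monotone class argument---the class of $G \in \mathcal{F}$ admitting such an $F$ is closed under complements and countable monotone unions (again using $L^1$-limits of first-coordinate indicators) and contains the cylinder algebra---extends the statement to every $G \in \mathcal{F}$.

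Next I would construct $\phi$. Choose $\tilde A_{0,j} \in \mathcal{F}$ for each $j \in \nset{N}$ such that $\tilde A_{0,j} \times \nset{N}^{\Z} = \nset{N}^{\Z} \times \ns{\y : \y_0 = j}$ modulo $\zeta$-null sets. The computations $\mu(\tilde A_{0,j}) = \nu(\ns{\y_0 = j})$ and $\mu(\tilde A_{0,j} \cap \tilde A_{0,j'}) = 0$ for $j \ne j'$ show that $(\tilde A_{0,j})_{j}$ is a $\mu$-mod-null partition; modify on a $\mu$-null set to get a genuine partition $(A_{0,j})_j$. Define $A_{i,j} := T^{-i}A_{0,j}$ for all $i \in \Z$; applying $(T \times S)^{-i}$ and using $\zeta$-invariance gives $A_{i,j} \times \nset{N}^{\Z} = \nset{N}^{\Z} \times \ns{\y : \y_i = j}$ modulo $\zeta$-null. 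Set $\phi(\x)_i$ to be the unique $j$ with $\x \in A_{i,j}$. By construction $\phi$ is measurable, satisfies $\phi \circ T = S \circ \phi$ everywhere it is defined, and pushes $\mu$ forward to $\nu$ since $\mu(\phi^{-1}\ns{\y_i = j}) = \mu(A_{i,j}) = \nu(\ns{\y_i = j})$.

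Finally, to obtain the factor equation, note that for every cylinder set $G \in \mathcal{F}$ the equality $\phi^{-1}(G) \times \nset{N}^{\Z} = \nset{N}^{\Z} \times G$ holds modulo $\zeta$-null sets, by taking Boolean combinations of the coordinate identities, and a monotone class sweep extends this to every $G \in \mathcal{F}$. Hence
\[
\zeta(F \times G) = \zeta\bigl((F \times \nset{N}^{\Z}) \cap (\phi^{-1}(G) \times \nset{N}^{\Z})\bigr) = \zeta\bigl((F \cap \phi^{-1}(G)) \times \nset{N}^{\Z}\bigr) = \mu\bigl(F \cap \phi^{-1}(G)\bigr),
\]
using that the first-coordinate marginal of $\zeta$ is $\mu$. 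The main obstacle is the first step: bootstrapping from an $\e$-approximation on the zero-coordinate $\sigma$-algebra to an exact mod-null equality for arbitrary measurable sets, for which the combination of the $L^1(\zeta)$-limit argument, shift invariance of $\zeta$, and the monotone class theorem is essential.
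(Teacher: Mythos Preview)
Your argument is correct. The paper does not actually prove this lemma: its entire proof is the single line ``See \cite[Theorem 2.8]{Rue}'', deferring to de la Rue's survey on joinings. What you have written is a self-contained unpacking of that citation, and it follows the standard route: show that each second-coordinate event agrees mod $\zeta$ with a first-coordinate event (via an $L^1$ Cauchy argument on the approximating indicators), read off a measurable, equivariant selector $\phi$ coordinate by coordinate, and then verify the graph-joining identity on cylinders and extend by a monotone class. One small remark on presentation: your claim that $\phi$ pushes $\mu$ forward to $\nu$ is only checked on one-coordinate events at the point you assert it, but the full statement follows immediately from the factor equation you prove at the end by taking $F=\nset{N}^{\Z}$, so there is no actual gap.
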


\begin{pf}
See \cite{Rue}, Theorem~2.8.
\end{pf}

Thus if $\zeta$ is an $\varepsilon$-almost factor for all $\varepsilon
>0$, then we say
that $\zeta$ is a \textit{factor}.

%
\begin{proposition}
\label{baire-set}
Let $\mu=p^{\mathbb{Z}}$ and $\nu=q^{\mathbb{Z}}$ be two Bernoulli
measures on
$[N]^{\mathbb{Z}}$, where $H(p) > H(q)$ and $p \succeq q$.
For each $n>0$, let $\mathcal E^n$ be the set of elements $\xi$ of
$\mathcal{M}$
that are $1/n$-almost factors. For each $n >0$, the following hold:
%
\begin{longlist}[(A)]
\item[(A)]
The set $\mathcal E^n$ is a relatively open subset of $\mathcal M$.
\item[(B)]
The set $\mathcal E^n$ is a dense subset of $\mathcal M$.
\end{longlist}
\end{proposition}

Proposition~\ref{baire-set}(A) is a standard argument
\cite{down}, pages 123--124,  that we give for completeness. The proof of
Proposition~\ref{baire-set}(B) will require the use of the
alternating star-joining.

\begin{pf*}{Proof of Theorem~\ref{result}}
By Proposition~\ref{baire-set}, the set defined by
\[
\mathcal E :=\bigcap_{n\geq1} \mathcal E^n
\]
is an intersection of relatively open dense subsets of $\mathcal M$.
By Lemma~\ref{baire-lemma}, $\mathcal E$ is a~nonempty subset of
$\mathcal M$,
and by Lemma~\ref{factor}, its elements are factors.
\end{pf*}

\begin{pf*}{Proof of Proposition~\ref{baire-set}(A)}
Let $\zeta\in\mathcal{E}^n$. Recall that $S$ is the left-shift.
Since the sigma-field $\mathcal F$ is generated by $\bigvee_{i \in
\mathbb{Z}}
S^{-i} \mathcal P$, and there are only a finite number of elements
in $\mathcal P$, there exists $m\in\mathbb{N}$ so that for all $P \in
\mathcal{P}$,
there is a~corresponding $P' \in \bigvee_{|i| <m} S^{-i} \mathcal P$
for which
%
\begin{equation}
\label{clopen} \zeta\bigl( \bigl(P' \times[N]^{\mathbb{Z}}
\bigr)  \triangle \bigl([N]^{\mathbb{Z}} \times P\bigr) \bigr) < 1/n.
\end{equation}
Note that \eqref{clopen} persists for all sufficiently small
perturbations of $\zeta$ since each $P$ and corresponding $P'$ are clopen sets.
\end{pf*}

\begin{pf*}{Proof of Proposition~\ref{baire-set}(B)}
Let $\zeta\in\mathcal M$, and $\varepsilon>0$ and $n >0$. We show
that with a
proper choice of parameters that for the alternating star-joining, we have
${\zeta}_{\mathrm{alt}*} \in\mathcal{E}^n$ and $d^{*}(\zeta, {\zeta
}_{\mathrm{alt}*}) < \varepsilon$.
Note that by Lemma~\ref{weak-star-del}, ${\zeta}_{\mathrm{alt}*}$ is a
monotone joining of
$\mu$ and $\nu$.
The following is a list of the parameters, chosen in order:
\begin{longlist}[(iii)]
\item
By reducing $\varepsilon$ if necessary, we may assume $\varepsilon< 1/n$.
\item
Set $\varepsilon' = \varepsilon/10$.
\item
Using Lemmas \ref{frozen}, \ref{mono-entropy} and \ref{weak-star-del}, choose
$k_{\mathrm{mark}}$ large enough so that the probability that the
origin is in a frozen
interval is less than $\varepsilon'$,
$h({\mu}_{\mathrm{fill}}) > h({\nu}_{\mathrm{fill}})$, and so for any
choice of $n_{\mathrm{initial}}$
and probability measure ${\beta}_{\mathrm{sub}}$ that is subordinate
to $\Gamma^{n_{\mathrm{initial}}}$ and has the same marginals, we have
\[
d^{*}(\zeta, {\zeta}_{\mathrm{alt}*}) < \varepsilon' +
2{\zeta}_{\mathrm{alt}*}(R) + n_{\mathrm{initial}}/n_{\mathrm{rel}},
\]
where $n_{\mathrm{rel}} \geq n_{\mathrm{initial}}$ and the set $R$ was
defined in Lemma~\ref{weak-star-del}.
\item
Appealing to Proposition~\ref{delJunco},
with $k_{\mathrm{mark}}$ as chosen above,
and $\eta= \varepsilon'$, we get an $n_{\mathrm{initial}}$ and a ${\beta
}_{\mathrm{sub}}$ which realizes
the conclusion of the proposition and in particular is subordinate to
$\Gamma_{\zeta}^{n_{\mathrm{initial}}}$ and has the same marginals.

%
\item
Choose an integer $n_{\mathrm{rel}} > n_{\mathrm{initial}}$ so that
$n_{\mathrm{initial}}/n_{\mathrm{rel}} < \varepsilon'$.
\item
Choose $r_{\mathrm{mark}}$ so that ${\zeta}_{\mathrm{alt}*}(R) <
\varepsilon'$ and so that the probability that
origin is not in an action block is less $\varepsilon'$.
\end{longlist}

By (i) it remains to argue that ${\zeta}_{\mathrm{alt}*}$ is an
$\varepsilon$-factor.
It suffices to define a deterministic function
$\psi\dvtx  [N]^{\mathbb{Z}} \to[N]$ so that
\[
{\zeta}_{\mathrm{alt}*} \bigl( (\mathrm{x}, \mathrm{y})\dvtx  (\mathrm{x}, \mathrm
{y}_0) = \bigl(\mathrm{x}, \psi(\mathrm{x})\bigr) \bigr) > 1 - \varepsilon.
\]
By the definition of ${\zeta}_{\mathrm{alt}*}$ and Proposition~\ref
{delJunco} and
(iv), it follows that $\psi$ can be easily defined from the
deterministic function $\Psi$ of the proposition, provided that
the origin is in a free interval on an action block. On the other hand,
by (iii) with probability less than $\varepsilon'$ the origin belongs
to a frozen interval, and by (vi) with probability greater than
$1-\varepsilon'$ the origin belongs to an action block where the
star-filling is
applied to the free intervals.
\end{pf*}

Finally, we discuss the proof of Theorem~\ref{relation}. Let $p$ and
$q$ be probability measures on $[N]$, and let $R$ be a relation on
$[N]$. Call a joining $\zeta$ of $\mu=p^{\mathbb{Z}}$ and
$\nu=q^{\mathbb{Z}}$ an \textit{$R$-joining} if
\[
\zeta\bigl\{ (\mathrm{x},\mathrm{y}) \in[N]^{\mathbb{Z}} \times
[N]^{\mathbb{Z}} \dvtx (\mathrm{x}_0, \mathrm{y}_0) \in R
\bigr\}=1.
\]
The proof of Theorem~\ref{relation} is the same as the proof of
Theorem~\ref{result}, except we work with $R$-couplings and
$R$-joinings instead of their monotone counterparts.

\begin{pf*}{Proof of Theorem~\ref{relation}}
We check the crucial details. By assumption there exists a probability
measure $\rho$ that is an $R$-coupling of $p$ and $q$.
Thus the set of $R$-joinings is nonempty. Given an $R$-joining of $\mu$
and $\nu$, the alternating joining is defined as before,
except instead of using the quantile coupling on individual
coordinates, we use the one given to us by assumption, $\rho$;
clearly the resulting alternating joining is still an $R$-joining. The
alternating star-joining is defined as before.
To check that it is a $R$-joining, we use the same facts that were
used to check monotonicity. The main point is
that the measure given Proposition~\ref{sub} is subordinate to the
original one, and del Junco's star-coupling is a coupling.
It follows from \eqref{d-block-dist} and the observation that if
$\alpha$ is an $R$-coupling, then any measure subordinate to $\alpha$
must also be an $R$-coupling.
\end{pf*}

\section*{Acknowledgment}
We thank Ay{\c{s}}e
\c{S}ahin for helpful conversations.

%





\printaddresses
\end{document}